\definecolor{uuuuuu}{rgb}{0.26666666666666666,0.26666666666666666,0.26666666666666666}
\definecolor{xdxdff}{rgb}{0.49019607843137253,0.49019607843137253,1.}
\definecolor{ffqqqq}{rgb}{1.,0.,0.}
\definecolor{uuuuuu}{rgb}{0.26666666666666666,0.26666666666666666,0.26666666666666666}
\definecolor{qqwuqq}{rgb}{0.,0.39215686274509803,0.}
\definecolor{zzttqq}{rgb}{0.6,0.2,0.}
\definecolor{xdxdff}{rgb}{0.49019607843137253,0.49019607843137253,1.}
\definecolor{qqqqff}{rgb}{0.,0.,1.}
\definecolor{cqcqcq}{rgb}{0.7529411764705882,0.7529411764705882,0.7529411764705882}
\definecolor{uuuuuu}{rgb}{0.26666666666666666,0.26666666666666666,0.26666666666666666}
\definecolor{qqwuqq}{rgb}{0.,0.39215686274509803,0.}
\definecolor{zzttqq}{rgb}{0.6,0.2,0.}
\definecolor{xdxdff}{rgb}{0.49019607843137253,0.49019607843137253,1.}
\definecolor{qqqqff}{rgb}{0.,0.,1.}
\definecolor{cqcqcq}{rgb}{0.7529411764705882,0.7529411764705882,0.7529411764705882}
\theoremstyle{plain}
\newtheorem{theorem}[subsection]{Theorem}
\newtheorem{lemma}[subsection]{Lemma}
\newtheorem{prop}[subsection]{Proposition}
\newtheorem{defi}[subsection]{Definition}
\theoremstyle{definition}
\newtheorem{cor}[subsection]{Corollary}
\newtheorem{remark}[subsection]{Remark}
\newtheorem{note}[subsection]{Note}
\newcommand{\uu}{\cup}
\newcommand{\ii}{\cap}
\newcommand{\UU}{\bigcup}
\newcommand{\sci}{\subset}
\newcommand{\es}{\emptyset}
\newcommand{\set}[1]{\{#1\}}
\newcommand{\ga}{\alpha}
\newcommand{\gb}{\beta}
\newcommand{\gd}{\delta}
\renewcommand{\gg}{\gamma}
\newcommand{\go}{\omega}
\newcommand{\gs}{\sigma}
\newcommand{\gt}{\tau}
\newcommand{\tit}{\textit}
\newcommand{\D}[1]{\mathbb{#1}}
\newcommand{\te}{\text}
\begin{document}
To appear, Discrete and Continuous Dynamical Systems - Series A
\title{Least upper bound of the exact  formula for optimal quantization of some uniform Cantor distributions}

\author{ Mrinal Kanti Roychowdhury}
\address{School of Mathematical and Statistical Sciences\\
University of Texas Rio Grande Valley\\
1201 West University Drive\\
Edinburg, TX 78539-2999, USA.}
\email{mrinal.roychowdhury@utrgv.edu}

\subjclass[2010]{60Exx, 28A80, 94A34.}
\keywords{Cantor set, probability distribution, optimal quantizers, quantization error}
\thanks{The research of the author was supported by U.S. National Security Agency (NSA) Grant H98230-14-1-0320}

\date{}
\maketitle

\pagestyle{myheadings}\markboth{Mrinal Kanti Roychowdhury}{Least upper bound of the exact  formula for optimal quantization of some uniform Cantor distributions}

\begin{abstract} The quantization scheme in probability theory deals with finding a best approximation of a given probability distribution by a probability distribution that is supported on finitely many points. Let $P$ be a Borel probability measure on $\mathbb R$ such that $P=\frac 12 P\circ S_1^{-1}+\frac 12 P\circ S_2^{-1},$ where $S_1$ and $S_2$ are two contractive similarity mappings given by $S_1(x)=rx$ and $S_2(x)=rx+1-r$ for $0<r<\frac 12$ and $x\in \mathbb R$. Then, $P$ is supported on the Cantor set generated by $S_1$ and $S_2$. The case $r=\frac 13$ was treated by Graf and Luschgy who gave an exact formula for the unique optimal quantization of the Cantor distribution $P$ (Math. Nachr., 183 (1997), 113-133). In this paper, we compute the precise range of $r$-values to which Graf-Luschgy formula extends.
\end{abstract}

\section{Introduction}

The most common form of quantization is rounding-off. Its purpose is to reduce the cardinality of the representation space, in particular, when the input data is real-valued. It has broad applications in communications, information theory, signal processing and data compression (see \cite{GG, GL1, GL2, GN, P, Z1, Z2}).  Let $\D R^d$ denote the $d$-dimensional Euclidean space equipped with the Euclidean norm $\|\cdot\|$, and let $P$ be a Borel probability measure on $\D R^d$. Then, the $n$th \textit{quantization
error} for $P$, with respect to the squared Euclidean distance, is defined by
\begin{equation*} \label{eq1} V_n:=V_n(P)=\inf \Big\{V(P, \ga) : \ga \subset \mathbb R^d, \text{ card}(\ga) \leq n \Big\},\end{equation*}
where $V(P, \ga)= \int \min_{a\in\alpha} \|x-a\|^2 dP(x)$ represents the distortion error due to the set $\ga$ with respect to the probability distribution $P$.
A set $\ga\sci \D R^d$ is called an optimal set of $n$-means for $P$ if $V_n(P)=V(P, \ga)$.
For a finite set $\ga \sci \D R^d$ and $a\in \ga$, by $M(a|\ga)$ we denote the set of all elements in $\D R^d$ which are nearest to $a$ among all the elements in $\ga$, i.e.,
\[M(a|\ga)=\set{x \in \D R^d : \|x-a\|=\min_{b \in \ga}\|x-b\|}.\]
$M(a|\ga)$ is called the \tit{Voronoi region} generated by $a\in \ga$. On the other hand, the set $\set{M(a|\ga) : a \in \ga}$ is called the \tit{Voronoi diagram} or \tit{Voronoi tessellation} of $\D R^d$ with respect to the set $\ga$.
\begin{defi} \label{defi00}
A set $\ga\sci \D R^d$ is called a \tit{centroidal Voronoi tessellation} (CVT) with respect to a probability distribution $P$ on $\D R^d$,  if it satisfies the following two conditions:

$(i)$ $P(M(a|\ga)\ii M(b|\ga))=0$ for $a, b\in \ga$, and $a \neq b$;

$(ii)$ $E(X : X \in M(a|\ga))=a$ for all $a\in \ga$,

where $X$ is a random variable with distribution $P$, and $E(X : X \in M(a|\ga))$ represents the conditional expectation of the random variable $X$ given that $X$ takes values in $M(a|\ga)$.
\end{defi}
A Borel measurable partition $\set{A_a : a\in \ga}$ is called a \tit{Voronoi partition} of $\D R^d$ with respect to the probability distribution $P$, if $P$-almost surely  $A_a\sci M(a|\ga)$ for all $a\in \ga$.
Let us now state the following proposition (see \cite{GG, GL2}).
\begin{prop} \label{prop0}
Let $\ga$ be an optimal set of $n$-means with respect to a probability distribution $P$, $a \in \ga$, and $M (a|\ga)$ be the Voronoi region generated by $a\in \ga$.
Then, for every $a \in\ga$,

$(i)$ $P(M(a|\ga))>0$, $(ii)$ $ P(\partial M(a|\ga))=0$, $(iii)$ $a=E(X : X \in M(a|\ga))$, and $(iv)$ $P$-almost surely the set $\set{M(a|\ga) : a \in \ga}$ forms a Voronoi partition of $\D R^d$.
\end{prop}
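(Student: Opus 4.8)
The plan is to deduce the four assertions from the optimality of $\ga$ together with two standard facts, proving them in the order (i), then the overlap statement underlying (ii) and (iv), then (iii). First I note that optimality forces $\int\|x\|^2\,dP<\infty$ (else $V_n(P)=\infty$), so that every conditional expectation $E(X:X\in A)$ with $P(A)>0$ is finite and the centroid manipulations below are legitimate; for the Cantor distributions here this is immediate since the support lies in $[0,1]$. The second fact is the strict monotonicity $V_n(P)<V_{n-1}(P)$, valid whenever $\operatorname{card}(\operatorname{supp}P)\ge n$ (true here, the support being an infinite Cantor set), which I would prove by taking an optimal $(n-1)$-set $\gb$, a point $z\in\operatorname{supp}P\setminus\gb$, and a radius $\rho<\tfrac12\min_{b\in\gb}\|z-b\|$, and observing that adjoining $z$ to $\gb$ strictly decreases the pointwise cost on $B(z,\rho)$, which has positive $P$-mass since $z\in\operatorname{supp}P$. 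Assertion (i) is then immediate: if $P(M(a|\ga))=0$ for some $a\in\ga$, then $\min_{b\in\ga\setminus\{a\}}\|x-b\|^2=\min_{b\in\ga}\|x-b\|^2$ for every $x\notin M(a|\ga)$, so $V(P,\ga\setminus\{a\})=V(P,\ga)=V_n(P)$, contradicting $V_{n-1}(P)>V_n(P)$.

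The engine for the remaining parts is a \emph{centroid lemma}: for any Borel Voronoi partition $\{A_a\}_{a\in\ga}$ of $\ga$ and any $a$ with $P(A_a)>0$, one has $a=E(X:X\in A_a)$. I would prove it via the chain
\[
V(P,\ga)=\sum_{a\in\ga}\int_{A_a}\|x-a\|^2\,dP\ \ge\ \sum_{a\in\ga}\int_{A_a}\|x-c_a\|^2\,dP\ \ge\ \int\min_{a\in\ga}\|x-c_a\|^2\,dP\ \ge\ V_n(P),
\]
where $c_a:=E(X:X\in A_a)$ if $P(A_a)>0$ and $c_a:=a$ otherwise; the first equality uses $A_a\subseteq M(a|\ga)$, the first inequality is summand-wise because $y\mapsto\int_{A_a}\|x-y\|^2\,dP$ is uniquely minimized at $c_a$ (strictly if $a\neq c_a$), and the last inequality uses optimality. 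The two extremes agreeing forces equality throughout, hence $a=c_a$ whenever $P(A_a)>0$.

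For (ii) and (iv) it suffices to prove $P\!\left(M(a|\ga)\cap M(b|\ga)\right)=0$ for all distinct $a,b\in\ga$: then any Voronoi partition agrees with $\{M(a|\ga)\}$ off a $P$-null set, giving (iv), and since $M(a|\ga)^c\subseteq\bigcup_{b\neq a}M(b|\ga)$ with the right side closed, $\partial M(a|\ga)\subseteq\bigcup_{b\neq a}\!\left(M(a|\ga)\cap M(b|\ga)\right)$, giving (ii). Suppose some such overlap is not $P$-null, and among all $S\subseteq\ga$ for which
\[
E_S:=\bigl\{x\in\D R^d:\ \|x-c\|=\textstyle\min_{d\in\ga}\|x-d\|\text{ for }c\in S,\ \|x-c\|>\min_{d\in\ga}\|x-d\|\text{ for }c\notin S\bigr\}
\]
has $P(E_S)>0$, pick one of maximal cardinality (legitimate since the $E_S$ with $|S|\ge2$ fill out $\bigcup_{a\neq b}(M(a|\ga)\cap M(b|\ga))$). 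Fix distinct $a,b\in S$ and a fixed lowest-index Voronoi partition $\{B_c\}_{c\in\ga}$ of $\D R^d\setminus E_S$. Because $E_S\subseteq M(a|\ga)\cap M(b|\ga)$, replacing the $a$-cell $B_a$ by $B_a\cup E_S$ yields a Borel Voronoi partition $\pi_1$ of $\D R^d$, and replacing the $b$-cell $B_b$ by $B_b\cup E_S$ yields another, $\pi_2$. The $a$-cell of $\pi_1$ has positive mass, so the centroid lemma gives $a=E(X:X\in B_a\cup E_S)$; comparing with the $a$-cell $B_a$ of $\pi_2$ (using $a=E(X:X\in B_a)$ from the centroid lemma if $P(B_a)>0$, trivially if $P(B_a)=0$) gives $E(X:X\in E_S)=a$; symmetrically $E(X:X\in E_S)=b$; hence $a=b$, a contradiction. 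Finally (iii): with all pairwise overlaps $P$-null, $E(X:X\in M(a|\ga))=E(X:X\in A_a)$ for any Voronoi partition, so since $P(M(a|\ga))>0$ by (i) the centroid lemma yields $a=E(X:X\in M(a|\ga))$.

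The one genuinely delicate point is the vanishing of the overlaps: it requires isolating a \emph{positive-measure} piece of cell boundary that can be moved between two prescribed cells without disturbing any other cell — which is exactly what the maximal-cardinality choice of $S$ provides — and then tracking which cells are guaranteed positive $P$-mass so that the centroid lemma applies (the choice $a,b\in S$, a fixed tie-breaking rule, and (i) handle this, with a short case split on whether $P(B_a)$, $P(B_b)$ vanish). Everything else is routine: compact support gives integrability, and closedness of the Voronoi cells plus the fact that they cover $\D R^d$ gives the boundary inclusion in (ii). (In the ambient dimension $d=1$ of this paper one could shorten most of this, the cells being intervals and the Cantor measure nonatomic, but the argument above needs only optimality.)
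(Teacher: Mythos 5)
Your argument is correct, but note that the paper itself offers no proof of this proposition: it is stated with a pointer to Gersho--Gray and to Graf--Luschgy's monograph, where (for the squared Euclidean distance) it is the combination of the centroid condition for optimal quantizers with the theorem that optimal Voronoi cells have $P$-null boundaries. What you have written is essentially a self-contained reconstruction of that standard proof, and each step checks out: strict monotonicity $V_n(P)<V_{n-1}(P)$ (valid because $\operatorname{card}(\operatorname{supp}P)\ge n$, which holds here since the support is an infinite Cantor set) yields (i); the equality chain forcing $a=E(X:X\in A_a)$ for every positive-mass cell of every Voronoi partition is exactly the centroid condition; and the device of reassigning a positive-mass tie set $E_S$ wholesale to two different centers of $S$ is the standard way to obtain $P\big(M(a|\ga)\cap M(b|\ga)\big)=0$, from which (ii), (iii) and (iv) follow as you describe (your boundary inclusion $\partial M(a|\ga)\subseteq\bigcup_{b\neq a}\big(M(a|\ga)\cap M(b|\ga)\big)$ is correct, using that the finitely many cells are closed and cover $\mathbb{R}^d$). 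Two minor remarks. First, the maximal-cardinality choice of $S$ is unnecessary: any $S$ with $|S|\ge 2$ and $P(E_S)>0$ already works, because membership in $E_S$ determines the full set of nearest centers, so $E_S$ can be moved between any two elements of $S$ without disturbing the remaining cells. Second, your proof of $V_n<V_{n-1}$ quietly invokes the existence of an optimal set of $(n-1)$-means, which is a separate (standard) existence theorem; this is harmless, since the proposition already presupposes existence at level $n$, but it deserves a word of acknowledgement or a citation.
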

If $\ga$ is an optimal set of $n$-means and $a\in \ga$, then by Proposition~\ref{prop0}, we see that $a$ is the centroid of the Voronoi region $M(a|\ga)$ associated with the probability measure $P$, i.e., for a Borel probability measure $P$ on $\D R^d$, an optimal set of $n$-means forms a CVT of $\D R^d$; however, the converse is not true in general (see \cite{DFG, DR, R1, R2}).

Let $S_1$ and $S_2$ be two contractive similarity mappings on $\D R$ such that $S_1(x)= rx$ and $S_2(x)=rx +1-r$ for $0<r<\frac 12$, and $P=\frac 12 P\circ S_1^{-1}+\frac 12 P\circ S_2^{-1}$, where $P\circ S_i^{-1}$ denotes the image measure of $P$ with respect to
$S_i$ for $i=1, 2$ (see \cite{H}). Then, $P$ is a unique Borel probability measure on $\D R$ which has support the limit set generated by $S_1$ and $S_2$. By a word $\gs$ of length $k$, where $k\geq 1$, over the alphabet $\set{1, 2}$, it is meant that $\gs:=\gs_1\gs_2\cdots \gs_k\in \set{1, 2}^k$, and write $S_\gs:=S_{\gs_1}\circ S_{\gs_2}\circ \cdots \circ S_{\gs_k}$. For $\gs:=\gs_1\gs_2\cdots \gs_k\in \set{1, 2}^k$ and
$\tau:=\tau_1\tau_2\cdots \tau_\ell$ in $\{1, 2\}^\ell$, $k, \ell\geq 1$, by
$\gs\tau:=\gs_1\cdots \gs_k\tau_1\cdots \tau_\ell$ we mean the word obtained from the
concatenation of the words $\gs$ and $\tau$. A word of length zero is called the empty word and is denoted by $\es$.  For the empty word $\es$, by $S_\es$ we mean the identity mapping on $\D R$, and write $J:=J_\es=S_\es([0,1])=[0, 1]$. Then, the set $C:=\bigcap_{k\in \mathbb N} \bigcup_{\gs \in \{1, 2\}^k} J_\gs$ is known as the \textit{Cantor set} generated by the two mappings $S_1$ and $S_2$, and equals the support of the probability measure $P$ given by $P=\frac 12 P\circ S_1^{-1}+\frac 12 P\circ S_2^{-1}$, where $J_\gs:=S_\gs(J)$. For any $\gs \in \set{1, 2}^k$, $k\geq 1$, the intervals $J_{\gs1}$ and $J_{\gs2}$ into which $J_\gs$ is split up at the $(k+1)$th level are called the basic intervals of $J_\gs$.
\begin{defi}  \label{def1} For $n\in \D N$ with $n\geq 2$ let $\ell(n)$ be the unique natural number with $2^{\ell(n)} \leq n<2^{\ell(n)+1}$. For $I\sci \set{1, 2}^{\ell(n)}$ with card$(I)=n-2^{\ell(n)}$ let $\gb_n(I)$ be the set consisting of all midpoints $a_\gs$ of intervals $J_\gs$ with $\gs \in \set{1,2}^{\ell(n)} \setminus I$ and all midpoints $a_{\gs 1}$, $a_{\gs 2}$ of the basic intervals of $J_\gs$ with $\gs \in I$. Formally, $\gb_n(I)=\set{a_\gs : \gs \in \set{1,2}^{\ell(n)} \setminus I} \uu \set{a_{\gs 1} : \gs \in I} \uu \set {a_{\gs 2} : \gs \in I}.$
Moreover,
\begin{equation*}\label{eq11}
\int\mathop{\min}\limits_{a\in\gb_n(I)} \|x-a\|^2 dP=\frac{1}{ 2^{\ell(n)}} r^{2\ell(n)} V \Big(2^{\ell(n)+1}-n+r^2(n-2^{\ell(n)})\Big),
\end{equation*}
where $V$ is the variance.
\end{defi}

\begin{remark}
In the sequel, there are some ten digit decimal numbers. They are all rational approximations of some real numbers.
\end{remark}
In \cite{GL3}, Graf and Luschgy showed that $\gb_n(I)$ forms an optimal set of $n$-means for the probability distribution $P$ when $r=\frac 13$, and the $n$th quantization error is given by
\[V (\gb_n(I))=\frac{1}{18^{\ell(n)}}\cdot \frac 18 \Big(2^{\ell(n)+1}-n+\frac 19(n-2^{\ell(n)})\Big).\]
Notice that $\gb_n(I)$ forms a CVT of the Cantor set generated by the two mappings $S_1(x)=rx$ and $S_2(x)=rx+(1-r)$ for $0<r\leq \frac {5-\sqrt {17}}{2}$, i.e., if $0<r\leq 0.4384471872$ (up to ten significant digits). In \cite{R2}, we have shown that if $0.4371985206<r\leq 0.4384471872$ and $n$ is not of the form $2^{\ell(n)}$ for any positive integer $\ell(n)$, then there exists a CVT for the Cantor set for which the distortion error is smaller than the CVT given by $\gb_n(I)$ implying the fact that $\gb_n(I)$ does not form an optimal set of $n$-means for all $0<r\leq \frac {5-\sqrt {17}}{2}$. It was still not known what is the least upper bound of $r$ for which $\gb_n(I)$ forms an optimal set of $n$-means for all $n\geq 2$. In the following theorem, which is the main theorem of the paper, we give the answer of it.

\begin{theorem} \label{maintheorem} Let $\gb_n(I)$ be the set defined by Definition~\ref{def1}. Let $r\in (0, \frac 12)$ be the unique real number such that
\[(r-1)(r^4+r^2)=\frac{r^7+r^6+4 r^5-2 r^4-2 r^3-8 r^2+9 r-3}{6}.\]
Then, $r_0=r\approx 0.4350411707$ (up to ten significant digits) gives the least upper bound of $r$ for which the set $\gb_n(I)$ forms an optimal set of $n$-means for the uniform Cantor distribution $P$.
\end{theorem}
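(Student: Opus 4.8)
The plan is to determine, for each $n \geq 2$, the exact threshold $r$-value below which $\gb_n(I)$ is optimal, and then take the infimum over all $n$ of these thresholds; the claim is that this infimum equals the $r_0$ in the statement and is achieved (in a limiting sense) along a particular subsequence of $n$'s. First I would reduce the problem using the self-similarity of $P$: since $P = \frac12 P\circ S_1^{-1} + \frac12 P\circ S_2^{-1}$, an optimal set of $n$-means with $2^{\ell} \leq n < 2^{\ell+1}$ should, for $r$ not too large, distribute its points across the $2^{\ell}$ level-$\ell$ cylinders $J_\sigma$ by putting either one centroid (the midpoint $a_\sigma$) or a pair of centroids into each, the number of ``split'' cylinders being exactly $n - 2^{\ell}$. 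This is precisely the structure of $\gb_n(I)$, and the formula in Definition~\ref{def1} gives its distortion. So the real question is whether this cylinder-by-cylinder allocation is forced, i.e. whether an optimal configuration must respect the level-$\ell$ partition.

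The heart of the argument is therefore a local comparison: I would analyze the smallest cases where $\gb_n(I)$ could fail to be optimal, which the introduction already flags as $n$ not a power of $2$. The cleanest testbed is $n=3$: compare $\gb_3$ (one cylinder split, giving three points $a_1, a_{21}, a_{22}$ or symmetrically $a_{11}, a_{12}, a_2$) against the alternative ``asymmetric'' CVT that does not respect the first-level split — for instance a configuration with a centroid sitting on the middle gap region absorbing mass from both $J_1$ and $J_2$. Writing down the distortion of each as an explicit rational function of $r$ and setting them equal produces a polynomial equation in $r$; I expect this to be exactly the degree-$7$ equation
\[
(r-1)(r^4+r^2)=\frac{r^7+r^6+4 r^5-2 r^4-2 r^3-8 r^2+9 r-3}{6},
\]
whose relevant root in $(0,\tfrac12)$ is $r_0 \approx 0.4350411707$. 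One then checks that for $r < r_0$ the $\gb_3$ configuration strictly beats this competitor, and by a finite case-check against the remaining CVT candidates for $n=3$ (there are only finitely many by Proposition~\ref{prop0}(iii), since a CVT is determined by its Voronoi structure), $\gb_3(I)$ is optimal exactly when $r \le r_0$.

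Next I would show that $n=3$ is the worst case — that for every other $n$ the optimality threshold is at least $r_0$ — so that the least upper bound over all $n$ is still $r_0$. For powers of two, $n = 2^{\ell}$, the set $\gb_n(I)$ consists of the $2^{\ell}$ midpoints, and its optimality holds on the whole CVT range $0 < r \le \frac{5-\sqrt{17}}{2}$ (this is essentially the one-point-per-cylinder case and follows by self-similar scaling from the $n=1$ trivial case). For general $n$ with $2^{\ell} \le n < 2^{\ell+1}$, the self-similarity lets me induct: an optimal set either respects the top-level split — in which case the problem decomposes into two subproblems on $J_1$ and $J_2$ with $n_1 + n_2 = n$ means, each governed by the same formula rescaled by $r^2$, and the Graf–Luschgy bookkeeping (minimizing $2^{\ell+1}-n + r^2(n-2^{\ell})$ type expressions) shows $\gb_n(I)$ is the best such — or it fails to respect the split, in which case the same boundary-crossing comparison as in the $n=3$ analysis applies but now with an extra factor $r^{2k}$ from the enclosing cylinder, which only makes the competitor relatively worse and pushes the threshold up, never down. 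Hence the binding constraint always comes from behavior ``at the top level,'' and the minimum over $n$ of the thresholds is attained in the $n=3$-type situation, giving $r_0$.

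The main obstacle is the case analysis hidden in the phrase ``the remaining CVT candidates'': one must be sure that the asymmetric competitor identified above is genuinely the best non-$\gb_n(I)$ configuration, not merely one of them. Proposition~\ref{prop0} guarantees any optimal set is a CVT, and a CVT on this Cantor set is rigid (each point is the conditional expectation over its Voronoi cell, and the cells are intervals), so there are only finitely many combinatorial types for each $n$; but enumerating them and checking the inequalities — these are the ``ten digit decimal numbers'' the remark warns about — is where the genuine work lies, and it is where one must be careful that $r_0$ is exactly the crossover for the \emph{tightest} competitor and that no other competitor crosses $\gb_n(I)$ at a smaller $r$. Establishing that the enclosing-cylinder factor $r^{2k}$ is monotone in the right direction, so that the $k=0$ ($\ell(n)=0$, i.e. essentially $n=2,3$) comparisons dominate, is the second delicate point; once both are in hand, the theorem follows by taking the infimum of thresholds and exhibiting $r_0$ as that infimum.
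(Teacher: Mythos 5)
Your outline correctly identifies the source of the degree-$7$ equation: it is exactly the crossover $V(P,\gb_3(I))=V(P,\gg_3(I))$ between the Graf--Luschgy set and the asymmetric three-point competitor whose middle centroid straddles the gap (the paper's $\gg_3=\set{a(11,121),a(122,211),a(212,22)}$), and the self-similar bookkeeping that propagates this single comparison to all $n$ not a power of $2$ is essentially the paper's Proposition~\ref{prop4}. But there is a genuine gap in the half of the argument that shows $r_0$ is actually \emph{attained}, i.e.\ that $\gb_n(I)$ is optimal at $r=r_0$. You propose to do this by ``a finite case-check against the remaining CVT candidates,'' justified by the claim that a CVT is rigid and has only finitely many combinatorial types. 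That step is not executable as stated: the combinatorial type of a CVT on the Cantor set is determined by where the cell boundaries (midpoints of consecutive centroids) fall relative to cylinders of \emph{every} level, so finiteness of the candidate list is not automatic; and even granting a finite list, beating all CVTs is what must be proved, not assumed. The paper replaces this with hard quantitative work (Section~\ref{sec344}): explicit distortion lower bounds showing any optimal $\ga_n$ meets $[0,r)$ and $(1-r,1]$ (Lemma~\ref{lemma13}), contains no point of the gap $(r,1-r)$ (Proposition~\ref{prop2}), and has Voronoi regions that do not cross the gap (Proposition~\ref{prop3}); only then does the splitting lemma (Lemma~\ref{lemma14}) and the Graf--Luschgy induction apply. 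Your proposal has no substitute for these estimates. Relatedly, your claim that the case $n=2^{\ell}$ ``follows by self-similar scaling from the $n=1$ trivial case'' is circular: the scaling argument presupposes that optimal sets decompose along cylinders, which is precisely the content of Propositions~\ref{prop2}--\ref{prop3}.

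Two smaller points. First, your heuristic that the enclosing-cylinder factor $r^{2k}$ ``pushes the threshold up'' for deeper-level violations is misstated: the factor rescales both configurations identically inside the cylinder, so the threshold is unchanged (this is why the paper gets the \emph{same} equation $V(\gb_3)=V(\gg_3)$ for every non-power-of-two $n$), though this does not damage your conclusion. Second, for the non-optimality direction you rely on a single competitor, but $\gg_n(I)$ is only controlled for $r\le 0.4376259168$, whereas $\gb_n(I)$ remains a CVT up to $\frac{5-\sqrt{17}}{2}\approx 0.4384$; the paper needs a second competitor $\gd_n(I)$ (Proposition~\ref{prop5}, from \cite{R2}) to cover the remaining sliver $0.4376259168<r\le\frac{5-\sqrt{17}}{2}$. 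Your argument should either supply such a competitor or explain why the distortion comparison with $\gg_n(I)$ persists beyond the range where it is known to be a CVT.
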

In the sequel instead of writing $r\approx 0.435041170$, we will write $r=0.435041170$. The arrangement of the paper is as follows:
In Definition~\ref{def2}, we have constructed a set $\gg_n(I)$, and in Proposition~\ref{prop51}, we have shown that $\gg_n(I)$ forms a CVT for the Cantor distribution $P$ if $0.3613249509\leq r\leq 0.4376259168$ (written up to ten decimal places). In Theorem~\ref{Th1}, we have proved that the set $\gb_n(I)$ forms an optimal set of $n$-means for $r=0.4350411707$. In Proposition~\ref{prop4}, we have shown that
 $V(P, \gb_n(I))=V(P, \gg_n(I))$ if $r=0.4350411707$, and $V(P, \gb_n(I))>V(P, \gg_n(I))$  if $0.4350411707< r\leq 0.4376259168<\frac{5-\sqrt{17}}2$. In Definition~\ref{def3}, we have constructed a set $\gd_n(I)$. In Proposition~\ref{prop5}, we have shown that if $0.4371985206<r\leq \frac{5-\sqrt{17}}2$, then $V(P, \gb_n(I))>V(P, \gd_n(I))$. Hence, if $0.4350411707<r\leq\frac{5-\sqrt{17}}2$, then the set $\gb_n(I)$ forms a CVT but does not form an optimal set of $n$-means implying the fact that the least upper bound of $r$ for which $\gb_n(I)$ forms an optimal set of $n$-means is given by $r_0=r\approx 0.4350411707$ (up to ten significant digits) which is Theorem~\ref{maintheorem}. Notice that the optimal sets of $n$-means and the $n$th quantization errors are not known for all Cantor distributions $P$ given by $P:=\frac 12 P\circ S_1^{-1}+\frac 12 P\circ S_2^{-1}$, where $S_1(x)= rx$ and $S_2(x)=rx +1-r$ for $0<r<\frac 12$. Thus, it is worthwhile to investigate the least upper bound of $r$ for which the exact formula to determine the optimal quantization given by Graf-Luschgy works.

\section{Preliminaries}
As defined in the previous section, let $S_1$ and $S_2$ be the two similarity mappings on $\D R$ given by $S_1(x)=rx$ and $S_2(x)=rx+1-r$, where $0<r<\frac 12$, and $P=\frac 12 P\circ S_1^{-1}+\frac 12 P\circ S_2^{-1}$ be the probability distribution on $\D R$ supported on the Cantor set generated by $S_1$ and $S_2$. Write $p_1=p_2=\frac 12$, and $s_1=s_2=r$. By $I^\ast$ we denote the set of all words over the alphabet $I:=\set{1,2}$ including the empty word $\es$. For $\go \in I^\ast$, by $s_\go$ we represent the similarity ratio of the composition mapping $S_\go$. Notice that the identity mapping has the similarity ratio one. Thus, if $\go:=\go_1\go_2\cdots \go_k$, then we have $s_{\go}=r^k$. Let $X$ be a random variable with probability distribution $P$. By $E(X)$ and $V:=V(X)$ we mean the expectation and the variance of the random variable $X$.
For words $\gb, \gg, \cdots, \gd$ in $\set{1,2}^\ast$, by $a(\gb, \gg, \cdots, \gd)$ we mean the conditional expectation of the random variable $X$ given $J_\gb\uu J_\gg \uu\cdots \uu J_\gd,$ i.e.,
\begin{equation*} \label{eq45} a(\gb, \gg, \cdots, \gd)=E(X|X\in J_\gb \uu J_\gg \uu \cdots \uu J_\gd)=\frac{1}{P(J_\gb\uu \cdots \uu J_\gd)}\int_{J_\gb\uu \cdots \uu J_\gd} x dP(x).
\end{equation*}
We now give the following lemma.

\begin{lemma} \label{lemma0001}
Let $P$ be the Cantor distribution and $i=1, 2$. A set $\ga \sci \D R$ is a CVT for $P$ if and only if $S_i(\ga)$ is a CVT for the image measure $P\circ S_i^{-1}$.
\end{lemma}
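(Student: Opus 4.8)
The plan is to use that each $S_i$ is an affine similarity, $S_i(x)=rx+c_i$ with $r>0$ (here $c_1=0$ and $c_2=1-r$), so that it behaves well both with respect to the Euclidean distance and with respect to taking expectations. First I would record two elementary transformation rules. \emph{Geometric rule:} since $\|S_i(x)-S_i(y)\|=r\|x-y\|$ for all $x,y\in\D R$, for any finite $\ga\sci\D R$ and any $a\in\ga$ one has $M(S_i(a)\mid S_i(\ga))=S_i(M(a\mid\ga))$, so the sets $S_i(M(a\mid\ga))$, $a\in\ga$, are exactly the Voronoi regions of $S_i(\ga)$. \emph{Measure rule:} since $S_i$ is injective, $(P\circ S_i^{-1})(S_i(B))=P(B)$ for every Borel set $B$; moreover, if $Y$ is a random variable with distribution $P$ then $S_i(Y)$ has distribution $P\circ S_i^{-1}$, so for every Borel set $B$ with $P(B)>0$, injectivity of $S_i$ together with linearity of the expectation yields
\[E_{P\circ S_i^{-1}}\big(X\mid X\in S_i(B)\big)=E\big(S_i(Y)\mid Y\in B\big)=S_i\big(E(Y\mid Y\in B)\big).\]

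Next I would combine the two rules. Assume $\ga$ is a CVT for $P$ and set $\gb:=S_i(\ga)$, so that a typical point of $\gb$ is $b=S_i(a)$ with $a\in\ga$. By the geometric rule $M(b\mid\gb)=S_i(M(a\mid\ga))$, and since the conditional expectation in condition (ii) of Definition~\ref{defi00} is assumed to make sense, $P(M(a\mid\ga))>0$, hence also $(P\circ S_i^{-1})(M(b\mid\gb))=P(M(a\mid\ga))>0$. For distinct $a,a'\in\ga$ the measure rule gives
\[(P\circ S_i^{-1})\big(M(b\mid\gb)\ii M(b'\mid\gb)\big)=P\big(M(a\mid\ga)\ii M(a'\mid\ga)\big)=0,\]
which is condition (i) for $(\gb,P\circ S_i^{-1})$; and the measure rule with $B=M(a\mid\ga)$ gives $E_{P\circ S_i^{-1}}\big(X\mid X\in M(b\mid\gb)\big)=S_i\big(E(Y\mid Y\in M(a\mid\ga))\big)=S_i(a)=b$, which is condition (ii). Hence $S_i(\ga)$ is a CVT for $P\circ S_i^{-1}$.

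For the converse I would note that this forward argument used nothing about $S_i$ and $P$ beyond the facts that $S_i$ is an affine similarity with positive ratio and $P$ is a Borel probability measure. Now $S_i^{-1}$ is again an affine similarity (with ratio $1/r>0$), and the image measure of $P\circ S_i^{-1}$ under $S_i^{-1}$ is the Borel probability measure $A\mapsto(P\circ S_i^{-1})(S_i(A))=P(A)$, i.e. it is $P$ itself. Applying the forward implication with $S_i$ replaced by $S_i^{-1}$ and $P$ replaced by $P\circ S_i^{-1}$, we obtain: if $S_i(\ga)$ is a CVT for $P\circ S_i^{-1}$, then $S_i^{-1}(S_i(\ga))=\ga$ is a CVT for $P$. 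Together with the forward implication this gives the equivalence.

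I do not expect a serious obstacle. The two points requiring a little care are the conditional-expectation identity in the measure rule --- where one must use that $S_i$ is \emph{affine}, not merely a similarity, so that the expectation operator commutes with $S_i$ --- and the observation that every set occurring (the Voronoi regions and their pairwise intersections) is Borel, so that all image-measure manipulations are legitimate; this is immediate, since each such set is a finite union of intervals.
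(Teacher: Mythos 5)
Your proof is correct and follows essentially the same route as the paper: the heart of both arguments is the identity $M(S_i(a)\mid S_i(\ga))=S_i(M(a\mid\ga))$, obtained from the fact that $S_i$ is a similarity, together with $(P\circ S_i^{-1})(S_i(B))=P(B)$. You are in fact somewhat more complete than the paper, which only verifies condition (i) of Definition~\ref{defi00} explicitly and dismisses the centroid condition (ii) and the converse direction with ``hence the lemma follows,'' whereas you check (ii) via the affineness of $S_i$ and obtain the converse by applying the forward implication to $S_i^{-1}$.
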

\begin{proof} Notice that $a, b\in \ga$ if and only if $S_i(a), S_i(b) \in S_i(\ga)$. Moreover, \[(P\circ S_i^{-1})\Big (M(S_i(a)|S_i(\ga))\ii M(S_i(b)|S_i(\ga))\Big)=P\Big(M(a|\ga)\ii M(b|\ga)\Big).\]
The last equation is true, since for any $c\in \ga$,
\begin{align*} &S_i^{-1}(M(S_i(c)|S_i(\ga)))=\set{S_i^{-1}(x) \in \D R : \|x-S_i(c)\|=\min_{b\in S_i(\ga)}\|x-b\|}\\
&=\set{y \in \D R :  \|S_i(y)-S_i(c)\|=\min_{b\in\ga}\|S_i(y)-S_i(b)\|}=\set{y \in \D R :  \|y-c\|=\min_{b\in\ga}\|y-b\|}\\
&=M(c|\ga).
\end{align*}
Hence, by Definition~\ref{defi00}, the lemma follows.
\end{proof}
From Lemma~\ref{lemma0001} the following corollary follows.

\begin{cor}\label{cor020}
Let $i=1, 2$, and let $\gb$ form a CVT for the image measure $P\circ S_i^{-1}$. Then, $S_i^{-1}(\gb)$ forms a CVT for the probability measure $P$.
\end{cor}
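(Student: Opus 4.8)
The plan is to read the claim off directly from Lemma~\ref{lemma0001}. The one point worth making explicit is that each $S_i$ is an invertible affine self-map of $\D R$: writing $c_1=0$ and $c_2=1-r$ so that $S_i(x)=rx+c_i$, the map $S_i^{-1}(y)=r^{-1}(y-c_i)$ is again an affine bijection of $\D R$, and in particular $S_i\circ S_i^{-1}$ is the identity mapping on $\D R$. Consequently, for any set $\gb\ci \D R$ the set $\ga:=S_i^{-1}(\gb)$ is well defined and satisfies $S_i(\ga)=S_i(S_i^{-1}(\gb))=\gb$.

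Next I would apply Lemma~\ref{lemma0001} to this particular $\ga$. The lemma asserts that $\ga$ is a CVT for $P$ if and only if $S_i(\ga)$ is a CVT for the image measure $P\circ S_i^{-1}$. By hypothesis $\gb=S_i(\ga)$ is a CVT for $P\circ S_i^{-1}$, so the ``if'' direction of this equivalence gives that $\ga=S_i^{-1}(\gb)$ is a CVT for $P$, which is precisely the assertion of the corollary.

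There is no real obstacle here: the corollary is just Lemma~\ref{lemma0001} read in the reverse direction after the substitution $\ga\leftrightarrow S_i^{-1}(\gb)$. The only steps that must not be skipped are recording the invertibility of $S_i$ on $\D R$ (so that $S_i^{-1}(\gb)$ is meaningful and $S_i(S_i^{-1}(\gb))=\gb$), and observing that one uses the equivalence in Lemma~\ref{lemma0001} in the direction ``$S_i(\ga)$ a CVT for $P\circ S_i^{-1}$ $\Rightarrow$ $\ga$ a CVT for $P$''.
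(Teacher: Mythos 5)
Your proposal is correct and matches the paper exactly: the paper offers no separate argument, simply noting that the corollary follows from Lemma~\ref{lemma0001}, which is precisely the substitution $\ga=S_i^{-1}(\gb)$ (so that $S_i(\ga)=\gb$ by invertibility of $S_i$) that you spell out. Your version just makes the implicit bookkeeping explicit.
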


The following two lemmas are well-known and easy to prove (see \cite{GL3, R2}).
\begin{lemma} \label{lemma1}
Let $f : \mathbb R \to \mathbb R^+$ be Borel measurable and $k\in \mathbb N$, and $P$ be the probability measure on $\D R$ given by $P=\frac 12 P\circ S_1^{-1}+\frac 12 P\circ S_2^{-1}$. Then
\[\int f(x) dP(x)=\sum_{\sigma \in \{1, 2\}^k} \frac 1 {2^k} \int f \circ S_\sigma(x) dP(x).\]
\end{lemma}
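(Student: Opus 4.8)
The plan is to prove the identity by induction on $k$, the essential analytic input being the change-of-variables formula for image (pushforward) measures. Recall that for any Borel measurable $g\colon \D R\to \D R^+$ and any $i\in\set{1,2}$, the defining property of the image measure $P\circ S_i^{-1}$ gives
\[\int g\, d(P\circ S_i^{-1})=\int (g\circ S_i)\, dP.\]
Because $f$ takes values in $\D R^+$, every integral appearing below is a well-defined element of $[0,+\infty]$, so no integrability hypotheses are required; moreover, measurability of each composite $f\circ S_\gs$ is automatic since every $S_\gs$ is continuous, hence Borel measurable.

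For the base case $k=1$, I would feed the self-similarity relation $P=\frac12 P\circ S_1^{-1}+\frac12 P\circ S_2^{-1}$ directly into $\int f\, dP$ and apply the change-of-variables formula to each of the two resulting integrals:
\[\int f\, dP=\tfrac12\int f\, d(P\circ S_1^{-1})+\tfrac12\int f\, d(P\circ S_2^{-1})=\tfrac12\int (f\circ S_1)\, dP+\tfrac12\int (f\circ S_2)\, dP,\]
which is precisely the claimed identity at level $k=1$.

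For the inductive step, assuming the identity for some $k$, I would apply the $k=1$ case to each function $f\circ S_\gs$ with $\gs\in\set{1,2}^k$. Invoking the composition rule $S_\gs\circ S_j=S_{\gs j}$ (concatenation of words, as set up in the Introduction), this splits each summand as
\[\int (f\circ S_\gs)\, dP=\tfrac12\sum_{j\in\set{1,2}}\int \big(f\circ S_{\gs j}\big)\, dP.\]
Substituting this into the inductive hypothesis and re-indexing the double sum by the words $\gs j\in\set{1,2}^{k+1}$ yields
\[\int f\, dP=\sum_{\gs\in\set{1,2}^k}\frac{1}{2^k}\cdot\tfrac12\sum_{j\in\set{1,2}}\int \big(f\circ S_{\gs j}\big)\, dP=\sum_{\gt\in\set{1,2}^{k+1}}\frac{1}{2^{k+1}}\int \big(f\circ S_{\gt}\big)\, dP,\]
which is the identity at level $k+1$ with the correct normalizing factor. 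There is no genuine obstacle here: the only points needing care are the justification of the change-of-variables formula and the interchange of the finite sum with the integral, and both are guaranteed once $f$ is nonnegative, the latter being merely additivity of the integral over nonnegative measurable functions.
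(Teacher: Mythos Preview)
Your proof is correct. The paper does not actually supply its own proof of this lemma; it simply states that the result is well-known and easy to prove, with references to \cite{GL3, R2}. Your induction-on-$k$ argument, driven by the self-similarity relation and the change-of-variables formula for pushforward measures, is exactly the standard route and matches what one finds in those references.
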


\begin{lemma} \label{lemma2} Let $X$ be a random variable with the probability distribution $P$. Then,
\[E(X)=\frac 12 \te{ and }  V:=V(X)=\frac{1-r}{4(1+r)}, \te{ and } \int (x-x_0)^2 dP(x) =V (X) +(x_0-\frac 12)^2,\]
where $x_0\in \D R$.
\end{lemma}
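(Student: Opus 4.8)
The plan is to exploit the self-similarity relation $P=\frac12 P\circ S_1^{-1}+\frac12 P\circ S_2^{-1}$ together with the reflection symmetry of the construction about the point $\frac12$, computing the mean first and then feeding its value into the remaining two identities. For the mean, I would apply Lemma~\ref{lemma1} with $k=1$ to the function $f(x)=x$, writing
\[
E(X)=\frac12\int S_1(x)\,dP(x)+\frac12\int S_2(x)\,dP(x).
\]
Substituting $S_1(x)=rx$ and $S_2(x)=rx+1-r$ and setting $m:=E(X)$ yields the linear fixed-point equation $m=rm+\frac{1-r}2$, which forces $m(1-r)=\frac{1-r}2$, i.e.\ $m=\frac12$. (Alternatively, one observes that the reflection $x\mapsto 1-x$ interchanges $S_1$ and $S_2$ and hence preserves $P$, giving $E(X)=\frac12$ directly.)

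For the variance I would again invoke Lemma~\ref{lemma1} with $k=1$, now applied to $f(x)=\left(x-\frac12\right)^2$. The key algebraic observation is that, having already established $E(X)=\frac12$, one can rewrite $S_1(x)-\frac12=r\left(x-\frac12\right)-\frac{1-r}2$ and $S_2(x)-\frac12=r\left(x-\frac12\right)+\frac{1-r}2$. Expanding the square in each of the two integrals, the cross term is proportional to $\int\left(x-\frac12\right)\,dP(x)=E(X)-\frac12=0$ and therefore drops out, leaving $\int\left(S_i(x)-\frac12\right)^2\,dP(x)=r^2V+\frac{(1-r)^2}4$ for $i=1,2$. Averaging the two gives the self-consistency relation $V=r^2V+\frac{(1-r)^2}4$, whence $V\left(1-r^2\right)=\frac{(1-r)^2}4$, and cancelling the common factor $(1-r)$ produces $V=\frac{1-r}{4(1+r)}$.

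The third identity is simply the parallel-axis decomposition: writing $x-x_0=\left(x-\frac12\right)+\left(\frac12-x_0\right)$ and expanding the square, the cross term again vanishes because $\int\left(x-\frac12\right)\,dP(x)=0$, which leaves $\int\left(x-x_0\right)^2\,dP(x)=V(X)+\left(x_0-\frac12\right)^2$.

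Since every step reduces to a single one-level application of the self-similarity identity followed by elementary algebra, there is no genuine obstacle here; the only point requiring care is the ordering of the arguments, namely computing the mean first so that its value $\frac12$ can be used to annihilate the cross terms in both the variance computation and the parallel-axis identity. This is exactly why the statement bundles the three facts together in this order.
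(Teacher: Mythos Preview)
Your argument is correct and is precisely the standard self-similarity computation one would expect: deriving $E(X)$ from the one-step recursion, then $V(X)$ from the resulting fixed-point equation, and finally the parallel-axis identity by expanding $(x-x_0)^2$ about $\frac12$. The paper itself does not give a proof of this lemma---it simply declares the result ``well-known and easy to prove'' and cites \cite{GL3, R2}---so there is nothing substantive to compare your approach against; your write-up is exactly the natural proof those references contain.
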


We now give the following corollary.
\begin{cor} \label{cor1}
Let $\gs \in \set{1, 2}^k$ for $k\geq 1$, and $x_0 \in \mathbb R$. Then,
\begin{equation*} \label{eq234} \int_{J_\gs} (x-x_0)^2 dP(x) =\frac 1{2^k} \Big(r^{2k} V  +(S_\gs(\frac 12)-x_0)^2\Big).\end{equation*}
\end{cor}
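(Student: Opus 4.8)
The plan is to collapse the integral over the single basic interval $J_\gs$ into an integral over all of $\D R$ against $P$, and then apply Lemma~\ref{lemma2}.

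First I would apply Lemma~\ref{lemma1}, with this value of $k$, to the Borel function $f(x)=\mbf{1}_{J_\gs}(x)\,(x-x_0)^2$, obtaining
\[\int_{J_\gs}(x-x_0)^2\,dP(x)=\sum_{\gt\in\set{1,2}^k}\frac1{2^k}\int \mbf{1}_{J_\gs}\!\big(S_\gt(x)\big)\,\big(S_\gt(x)-x_0\big)^2\,dP(x).\]
Because $P$ is supported on the Cantor set $C\ci[0,1]$ and $S_\gt(C)\ci J_\gt$, while for $0<\tfrac{}{}r<\tfrac12$ the basic intervals $J_1=[0,r]$ and $J_2=[1-r,1]$ are disjoint — and hence so are all the $J_\gt$, $\gt\in\set{1,2}^k$ — the factor $\mbf{1}_{J_\gs}(S_\gt(x))$ is $0$ for $P$-a.e.\ $x$ unless $\gt=\gs$. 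Only that one term survives, giving
\[\int_{J_\gs}(x-x_0)^2\,dP(x)=\frac1{2^k}\int \big(S_\gs(x)-x_0\big)^2\,dP(x).\]
(The same reduction also follows directly from the self-similarity identity $P(\,\cdot\ \cap J_\gs)=\tfrac1{2^k}\,P\circ S_\gs^{-1}$, without invoking Lemma~\ref{lemma1}.)

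Next, since $S_\gs$ is an orientation-preserving similarity of ratio $s_\gs=r^k$, I would write $S_\gs(x)=r^k x+b$ with $b:=S_\gs(0)$ and set $x_1:=(x_0-b)/r^k$. Then $S_\gs(x)-x_0=r^k(x-x_1)$, and likewise $S_\gs(\tfrac12)-x_0=r^k\big(\tfrac12-x_1\big)$, so that $\big(S_\gs(\tfrac12)-x_0\big)^2=r^{2k}\big(x_1-\tfrac12\big)^2$. Applying the third identity of Lemma~\ref{lemma2} at the point $x_1$,
\[\int \big(S_\gs(x)-x_0\big)^2\,dP(x)=r^{2k}\int (x-x_1)^2\,dP(x)=r^{2k}\Big(V+\big(x_1-\tfrac12\big)^2\Big)=r^{2k}V+\big(S_\gs(\tfrac12)-x_0\big)^2.\]
Dividing by $2^k$ yields the stated formula.

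The argument is entirely routine; the only point deserving any attention is the disjointness of the level-$k$ basic intervals used to discard all but one term of the sum, and this is exactly where the standing hypothesis $r<\tfrac12$ enters (for $r\ge\tfrac12$ the intervals overlap and the single-term reduction fails). Everything else is linear algebra in one variable together with the mean and variance of $P$ recorded in Lemma~\ref{lemma2}.
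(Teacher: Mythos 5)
Your proof is correct and follows the same route the paper uses implicitly (the corollary is stated without proof, but Note~\ref{note1} carries out the identical reduction $\int_{J_\gs} f\,dP=\frac 1{2^k}\int f\circ S_\gs\,dP$ for the first moment): collapse the integral over $J_\gs$ via the self-similarity of $P$, then apply the third identity of Lemma~\ref{lemma2} after factoring out the similarity ratio $r^k$. Your care about the disjointness of the level-$k$ intervals $J_\gt$ (which needs $r<\tfrac12$) is the only nontrivial point, and you handle it correctly.
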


\begin{note}  \label{note1} Corollary~\ref{cor1} is useful to obtain the distortion error.
By Lemma~\ref{lemma2}, it follows that the optimal set of one-mean is the expected value and the corresponding quantization error is the variance $V$ of the random variable $X$.
For $\gs \in \{1, 2\}^k$, $k\geq 1$, since $a(\gs)=E(X : X \in J_\gs)$,  using Lemma~\ref{lemma1}, we have
\begin{align*}
&a(\gs)=\frac{1}{P(J_\gs)} \int_{J_\gs} x \,dP(x)=\int_{J_\gs} x\, d(P\circ S_\gs)^{-1}(x)=\int S_\gs(x)\, dP(x)=E(S_\gs(X)).
\end{align*}
Since $S_1$ and $S_2$ are similarity mappings, it is easy to see that $E(S_j(X))=S_j(E(X))$ for $j=1, 2$ and so by induction, $a(\gs)=E(S_\gs(X))=S_\gs(E(X))=S_\gs(\frac 12)$
 for $\gs\in \{1, 2\}^k$, $k\geq 1$.
\end{note}

\begin{defi}\label{def2} For $n\in \D N$ with $n\geq 2$ let $\ell(n)$ be the unique natural number with $2^{\ell(n)} \leq n<2^{\ell(n)+1}$. Write
$\gg_2:=\set{a(1), a(2)}$ and $\gg_3:=\set{a(11, 121), a(122, 211),a(212, 22)}$. For $n\geq 4$, define $\gg_n:=\gg_n(I)$ as follows: \[\gg_n(I)=\left\{\begin{array}{cc}
\mathop{\uu}\limits_{\go\in I} S_\go(\gg_3)\UU \mathop{\uu}\limits_{\go \in \set{1, 2}^{\ell(n)-1}\setminus I} S_\go (\gg_2) & \te{ if } 2^{\ell(n)}\leq n\leq 3\cdot 2^{\ell(n)-1},\\
 \mathop{\uu}\limits_{\go\in \set{1, 2}^{\ell(n)-1}\setminus I} S_\go(\gg_3) \UU \mathop{\uu}\limits_{\go \in I} S_\go(\gg_4) & \te{ if } 3\cdot 2^{\ell(n)-1}<n< 2^{\ell(n)+1},
\end{array}
\right.\] where $I \sci \set{1, 2}^{\ell(n)-1}$ with $\te{card}(I)=n-2^{\ell(n)}$ if $2^{\ell(n)}\leq n\leq 3\cdot 2^{\ell(n)-1}$; and
$\te{card}(I)=n-3 \cdot 2^{\ell(n)-1}$ if $3\cdot 2^{\ell(n)-1}<n< 2^{\ell(n)+1}$.
\end{defi}

\begin{prop} \label{prop51}
Let $\gg_n:=\gg_n(I)$ be the set defined by Definition~\ref{def2}. Then, $\gg_n(I)$ forms a CVT for the Cantor distribution $P$ if $0.3613249509\leq r\leq 0.4376259168$ (written up to ten decimal places).
\end{prop}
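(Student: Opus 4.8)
The plan is to verify directly that the set $\gg_n(I)$ satisfies the two defining conditions of a centroidal Voronoi tessellation (Definition~\ref{defi00}) for the stated range of $r$, exploiting the self-similar structure of $P$ to reduce everything to a finite computation at small levels. First I would observe, via Lemma~\ref{lemma0001} and Corollary~\ref{cor020} together with the explicit formula in Definition~\ref{def2}, that $\gg_n(I)$ is built by pasting scaled copies of the three ``building blocks'' $\gg_2=\set{a(1),a(2)}$, $\gg_3=\set{a(11,121),a(122,211),a(212,22)}$, and $\gg_4$ onto the level-$(\ell(n)-1)$ intervals $J_\go$. Since each $S_\go$ is a similarity, Lemma~\ref{lemma0001} guarantees that the CVT property is preserved under these scalings; hence it suffices to check (a) that within each block the points are the conditional expectations of the corresponding unions of basic intervals (condition (ii)), and (b) that the Voronoi boundaries between points coming from \emph{adjacent} blocks $S_\go(\gg_k)$ and $S_{\go'}(\gg_{k'})$ fall in the ``gaps'' of the Cantor construction, so that no mass is misallocated and condition (i) holds automatically ($P$ of the boundary is zero). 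The centroid condition inside a block is an identity that follows from Note~\ref{note1} and Lemma~\ref{lemma2} and holds for \emph{all} $r\in(0,\tfrac12)$; the genuine constraints come only from the inter-block comparisons.

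Next I would make the inter-block conditions explicit. The relevant adjacencies are: (1) $\gg_2$ versus $\gg_2$, i.e. the midpoint-type point $a(2)$ of one interval against $a(1)$ of the next — this gives the classic condition that the midpoint of the Cantor gap lies to the correct side, which is what forces $r\le\frac{5-\sqrt{17}}{2}$; (2) $\gg_3$ (or $\gg_4$) against a neighboring $\gg_2$; and (3) $\gg_3$ against $\gg_3$ and $\gg_3$ against $\gg_4$. For each such pair one writes down the two competing centers using $a(\gs)=S_\gs(\tfrac12)$ and the grouped conditional expectations $a(\gb,\gg,\dots)$ — these are explicit rational functions of $r$ computed from Corollary~\ref{cor1} — computes the perpendicular bisector, and demands that it separate the supports of the two groups, i.e. that it lie in the appropriate gap interval $[S_{\go 1}(1),S_{\go 2}(0)]$-type segment. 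Each demand reduces to a polynomial inequality in $r$; intersecting all of them yields a closed interval, and the claim is that this interval is exactly $[0.3613249509,\,0.4376259168]$, with the lower endpoint coming from one family of comparisons (the $\gg_3$-internal one, where the grouping $\{11,121\}$ vs $\{122,211\}$ becomes degenerate) and the upper endpoint from another (a $\gg_3$-vs-$\gg_2$ or $\gg_3$-vs-$\gg_3$ comparison).

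Concretely, the steps in order are: (i) record the coordinates of all points of $\gg_2,\gg_3,\gg_4$ as functions of $r$, using $a(\gs)=S_\gs(\tfrac12)$ and Corollary~\ref{cor1} to evaluate the two-interval and multi-interval conditional expectations; (ii) verify the centroid condition inside each block (an algebraic identity, valid for all $r$); (iii) for the finitely many types of adjacent-block pairs, compute the midpoint of the two relevant centers and the endpoints of the Cantor gap separating their supports, and extract the resulting inequality on $r$; (iv) also check that no point of one block is closer to a center of a block two or more steps away — a monotonicity/distance estimate that holds comfortably in the stated range once (iii) holds; and (v) take the intersection of all the intervals from (iii) and identify the endpoints numerically as the stated bounds. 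The main obstacle I expect is step (iii): there are several qualitatively different adjacency types (especially because $\gg_n$ mixes $\gg_2$, $\gg_3$, $\gg_4$ blocks depending on whether $n\le 3\cdot 2^{\ell(n)-1}$ or not, and the two regimes meet), and one must be careful that the \emph{same} interval of $r$-values works simultaneously for every $n\ge 4$ and every admissible index set $I$; organizing the case analysis so that it is manifestly uniform in $n$ and $I$ — reducing it, via the self-similarity, to the fixed finite list of block-pair types — is the delicate bookkeeping that makes the proof go through.
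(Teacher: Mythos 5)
Your proposal is correct and follows essentially the same route as the paper: use the self-similarity (Lemma~\ref{lemma0001}) to reduce the CVT verification to the internal CVT conditions of the blocks $\gg_2,\gg_3,\gg_4$ together with finitely many worst-case inter-block bisector inequalities, each a polynomial condition in $r$, and intersect the resulting intervals. The only small discrepancy is your guess about which constraints bind: in the paper both endpoints $0.3613249509$ and $0.4376259168$ come from the internal CVT condition for $\gg_3$, while the inter-block comparisons (reduced by the monotonicity-in-the-predecessor argument to \eqref{eq93} and \eqref{eq94}) only require $r<0.485$ and are therefore not binding.
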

\begin{proof} $\gg_2$ forms a CVT for any $0<r<\frac 12$. Using the similar arguments as \cite[Lemma~3.13]{R2}, we can show that $\gg_3$ forms a CVT for $P$ if
\begin{equation} \label{eq55}
0.3613249509<r<0.4376259168.
\end{equation} We now prove the proposition for $n\geq 4$. Let $\ell(n)$ be the unique natural number with $2^{\ell(n)} \leq n<2^{\ell(n)+1}$. Thus, for $n\geq 4$, we have $\ell(n)\geq 2$. Notice that the two similarity mappings $S_1$ and $S_2$ are increasing mappings in the sense that $S_i(x)<S_i(y)$ for all $x, y\in \D R$ with $x<y$, where $i=1, 2$.  This induces an order relation $\prec$ on $I^\ast$ as follows: for $\go, \gt \in I^\ast$, we write $\go \prec \gt$ if $S_\go(x)<S_\gt(y)$ for $x, y\in \D R$ with $x<y$. Let
\begin{equation} \label{eq10001} \go^{(1)}\prec \go^{(2)}\prec \go^{(3)}\prec \cdots \prec \go^{(2^{\ell(n)-1})}\end{equation}
be the order of the $2^{\ell(n)-1}$ elements in the set $I^{\ell(n)-1}$. For $1\leq i<2^{\ell(n)-1}$ and $\go^{(i)} \in I^{2^{\ell(n)-1}}$, $a(\go^{(i)}2)$ and $a(\go^{(i+1)}1)$ are, respectively, the midpoints of the basic intervals of $J_{\go^{(i)}2}$ and $J_{\go^{(i+1)}1}$ yielding
\[S_{\go^{(i)}2}(1)<\frac 12(a(\go^{(i)}2)+a(\go^{(i+1)}1))<S_{\go^{(i+1)}1}(0),\]
and so, for $n=2^{\ell(n)}$, the set $\gg_n$ forms a CVT for $P$. Let us now assume that $2^{\ell(n)}<n<2^{\ell(n)+1}$. Then, the set $\gg_n$ will form a CVT for $P$ if for $1\leq i<2^{\ell(n)-1}$ we can show that the following inequalities are true:
\begin{align}
S_{\go^{(i)}22}(1)&<\frac 12(a(\go^{(i)} 212,  \go^{(i)}22)+a(\go^{(i+1)}11, \go^{(i+1)}121))<S_{\go^{(i+1)}11}(0) \label{eq78},\\
S_{\go^{(i)}22}&(1)<\frac 12(a(\go^{(i)}212, \go^{(i)}22)+a(\go^{(i+1)}1))<S_{\go^{(i+1)}1}(0)  \label{eq79},\\
S_{\go^{(i)}22}&(1)<\frac 12(a(\go^{(i)}212, \go^{(i)}22)+a(\go^{(i+1)}11))<S_{\go^{(i+1)}11}(0)  \label{eq80}.
\end{align}
We call $\gt$ a predecessor of a word $\go\in I^\ast$, if $\go=\gt\gd$ for some $\gd\in I^\ast$. If  $\go^{(i)}$ and $\go^{(i+1)}$ have a common predecessor $\gt$, then notice that the points $a(\go^{(i)} 212,  \go^{(i)}22)$ and $a(\go^{(i+1)}11, \go^{(i+1)}121)$ are reflections of each other about the point $S_\gt(\frac 12)$, i.e., $\frac 12(a(\go^{(i)} 212,  \go^{(i)}22)+a(\go^{(i+1)}11, \go^{(i+1)}121))=S_\gt(\frac 12)$, and $S_{\go^{(i)}22}(1)$ and $S_{\go^{(i+1)}1}(0)$ are in opposite sides of $S_\gt(\frac 12)$ , and so, the inequalities in \eqref{eq78} are true.
If  $\go^{(i)}$ and $\go^{(i+1)}$ have no common predecessor, i.e., the predecessor is the empty word $\es$, then the two points $a(\go^{(i)} 212,  \go^{(i)}22)$ and $a(\go^{(i+1)}11, \go^{(i+1)}121)$ are reflections of each other about the point $\frac 12$, i.e., $\frac 12(a(\go^{(i)} 212,  \go^{(i)}22)+a(\go^{(i+1)}11, \go^{(i+1)}121))=\frac 12$, and so, the inequalities in \eqref{eq78} are true. We now prove the inequalities in \eqref{eq79} and \eqref{eq80}.
To prove the inequalities in the following, by $\go^{(i)}\wedge\go^{(i+1)}$, we denote the common predecessor of the words $\go^{(i)}$ and $\go^{(i+1)}$. Notice that $s_{\go^{(i+1)}}=s_{\go^{(i)}}$ and $s_{\go^{(i+1)}1}=s_{\go^{(i)}}r $ for all $\go^{(i)}, \go^{(i+1)}\in I^{\ell(n)-1}$. We have
\begin{align} \label{eq87}
&a(\go^{(i)}212, \go^{(i)}22)+a(\go^{(i+1)}1)-2S_{\go^{(i)}22}(1)\\
&=\frac {1}{\frac 1{2^3}+\frac 1{2^2}} \Big(\frac 1 {2^3} S_{\go^{(i)}212}(\frac 12)+\frac 1 {2^2} S_{\go^{(i)}22} (\frac 12)\Big)+S_{\go^{(i+1)}1}(\frac 12)-2S_{\go^{(i)}22}(1) \notag\\
&=\frac {1}{3} \Big(S_{\go^{(i)}212}(\frac 12)-S_{\go^{(i)}22}(1)\Big)+\frac 23\Big( S_{\go^{(i)}22} (\frac 12)-S_{\go^{(i)}22}(1)\Big)+\Big(S_{\go^{(i+1)}1}(\frac 12)-S_{\go^{(i)}22}(1)\Big) \notag\\
&=s_{\go^{(i)}}\Big(\frac {1}{3}( S_{212}(\frac 12)-S_{22}(1))+\frac 23( S_{22} (\frac 12)-S_{22}(1))\Big)+\Big((1-2r)s_{\go^{(i)}\wedge\go^{(i+1)}} +\frac 12 s_{\go^{(i+1)}1}\Big) \notag\\
&=r^{\ell(n)-1}\Big(\frac {1}{3}( S_{212}(\frac 12)-S_{22}(1))+\frac 23( S_{22} (\frac 12)-S_{22}(1))\Big)+(1-2r)s_{\go^{(i)}\wedge\go^{(i+1)}}+\frac 12 r^{\ell(n)}.\notag
\end{align}
If $n=5$, i.e., when $\ell(n)=2$, then \eqref{eq87} reduces to
\begin{equation} \label{eq88} a(1212, 122)+a(21)-2S_{122}(1)=r \Big(\frac {1}{3}( S_{212}(\frac 12)-S_{22}(1))+\frac 23( S_{22} (\frac 12)-S_{22}(1))\Big)+(1-2r) +\frac 12 r^2.
\end{equation}
Let $\gt$ be the predecessor with the maximum length among all the predecessors of any two consecutive words $\go^{(i)}$ and $\go^{(i+1)}$ as defined in \eqref{eq10001}. Then, by \eqref{eq87} and \eqref{eq88},  we have
\begin{align*}
a(\gt 1212, \gt122)+a(\gt21)-2S_{\gt 122}(1)& \leq a(\go^{(i)}212, \go^{(i)}22)+a(\go^{(i+1)}1)-2S_{\go^{(i)}22}(1)\\
&\leq a(1212, 122)+a(21)-2S_{122}(1).
\end{align*}
Similarly, we can prove that
\begin{align*}
2 S_{\gt 21}(0)-a(\gt 1212, \gt122)-a(\gt21)& \leq 2 S_{\go^{(i+1)}1}(0)-a(\go^{(i)}212, \go^{(i)}22)-a(\go^{(i+1)}1)\\
&\leq 2 S_{21}(0)-a(1212, 122)- a(21).
\end{align*}
Thus, the inequalities in \eqref{eq79} will be true if we can prove that
\begin{equation} \label{eq91} S_{\gt 122}(1)<\frac 12(a(\gt 1 212, \gt 122)+a(\gt 2 1))<S_{\gt 2 1}(0). \end{equation}
Proceeding in the similar way, we can prove that the inequalities in \eqref{eq80} will be true if we can prove that
\begin{equation} \label{eq92}  S_{\gt 1 22}(1)<\frac 12(a(\gt 1 212, \gt 122)+a(\gt 211))<S_{\gt 211}(0). \end{equation}
Using Lemma~\ref{lemma0001}, we can say that the inequalities in \eqref{eq91} and \eqref{eq92} will be true if we can prove that
\begin{align}
S_{122}&(1)<\frac 12(a(1212, 122)+a(21))<S_{21}(0), \label{eq93}\te{ and } \\
S_{122}&(1)<\frac 12(a(1212, 122)+a(211))<S_{211}(0).  \label{eq94}
\end{align}
The inequalities in \eqref{eq93} are true if $0<r<0.4850084548$, and  the inequalities in \eqref{eq94} are true if $0 < r < 0.4847126592$. Combining these with \eqref{eq55}, we see that $\gg_n(I)$ forms a CVT for the Cantor distribution $P$ if $0.3613249509\leq r\leq 0.4376259168$ (written up to ten decimal places). Thus, the proof of the proposition is complete.
\end{proof}

\begin{prop} \label{prop34} For $n\geq 4$ let $\gg_n(I)$ be the set defined by Definition~\ref{def2}. Then,
\begin{align*}
&\int \mathop{\min}\limits_{a\in \gg_n(I)}(x-a)^2 dP\\
&=\left\{ \begin{array} {ll }
r^{2\ell(n)}V   \te{ if } n=2^{\ell(n)},\\
\frac 1{2^{\ell(n)-1}}r^{2(\ell(n)-1)} \Big(V_3\, (n-2^{\ell(n)}) +V_2 \,(3\cdot 2^{\ell(n)-1}-n)\Big)   \te{ if } 2^{\ell(n)}< n\leq 3 \cdot 2^{\ell(n)-1},\\
\frac 1{2^{\ell(n)-1}}r^{2(\ell(n)-1)} \Big(V_3\, (2^{\ell(n)+1}-n) +V_4\,(n-3\cdot 2^{\ell(n)-1})\Big) \te{ if } 3 \cdot 2^{\ell(n)-1}<n< 2^{\ell(n)+1},
\end{array}
\right.
\end{align*}
where $V_2:=V(P, \gg_2)$ and $V_3:=V(P, \gg_3)$, respectively, denote the distortion errors for the CVTs $\gg_2(I)$ and $\gg_3(I)$.
\end{prop}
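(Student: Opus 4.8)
The plan is to split the integral over the cylinder sets of generation $\ell(n)-1$, to observe that the Voronoi partition attached to $\gamma_n(I)$ does not cross from one such cylinder to another, and then to reduce each local piece to the distortion of $\gamma_2$, $\gamma_3$ or $\gamma_4$ by the self-similarity of $P$.

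First I would record that, since $P(J_\omega)=2^{-(\ell(n)-1)}$ for every $\omega\in\{1,2\}^{\ell(n)-1}$ and distinct cylinders of this generation are disjoint when $r<\frac12$,
\[\int\min_{a\in\gamma_n(I)}(x-a)^2\,dP=\sum_{\omega\in\{1,2\}^{\ell(n)-1}}\int_{J_\omega}\min_{a\in\gamma_n(I)}(x-a)^2\,dP.\]
By Definition~\ref{def2} the part of $\gamma_n(I)$ lying in a given $J_\omega$ is exactly one of $S_\omega(\gamma_2)$, $S_\omega(\gamma_3)$, $S_\omega(\gamma_4)$, and each of these sets is contained in $J_\omega$. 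I would then argue that for $P$-almost every $x\in J_\omega$ the element of $\gamma_n(I)$ nearest to $x$ already lies in this local cluster: the perpendicular bisectors that could possibly cut into $J_\omega$ are precisely the points whose location relative to the neighbouring cylinders is pinned down by the separation inequalities \eqref{eq78}, \eqref{eq79}, \eqref{eq80} from the proof of Proposition~\ref{prop51} (handling the $\gamma_3$--$\gamma_3$, $\gamma_3$--$\gamma_2$ and $\gamma_3$--$\gamma_4$ adjacencies), by the symmetry $x\mapsto 1-x$ of $P$ (handling the mirror adjacencies), and by the elementary fact that the midpoints of two disjoint intervals of equal length are separated by the midpoint of the gap between them (handling the $\gamma_2$--$\gamma_2$ and $\gamma_4$--$\gamma_4$ adjacencies). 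Thus each summand equals $\int_{J_\omega}\min_{a\in S_\omega(\gamma_j)}(x-a)^2\,dP$ with $j=j(\omega)\in\{2,3,4\}$.

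Next I would evaluate a single local term by self-similarity. Applying Lemma~\ref{lemma1} with $k=\ell(n)-1$ to $f(x)=\mathbf 1_{J_\omega}(x)\min_{a\in S_\omega(\gamma_j)}(x-a)^2$, only the word $\sigma=\omega$ contributes since $J_\sigma\cap J_\omega=\emptyset$ for distinct words of length $\ell(n)-1$; writing $a=S_\omega(c)$ with $c\in\gamma_j$ and using that $S_\omega$ has similarity ratio $r^{\ell(n)-1}$,
\[\int_{J_\omega}\min_{a\in S_\omega(\gamma_j)}(x-a)^2\,dP=\frac{1}{2^{\ell(n)-1}}\int\min_{c\in\gamma_j}\bigl(S_\omega(x)-S_\omega(c)\bigr)^2\,dP(x)=\frac{1}{2^{\ell(n)-1}}\,r^{2(\ell(n)-1)}\,V_j,\]
where $V_j:=V(P,\gamma_j)$. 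In particular $V_2=r^2V$, with $V$ the variance of Lemma~\ref{lemma2}: the two Voronoi regions of $\gamma_2=\{a(1),a(2)\}$ split the support into $J_1$ and $J_2$, and Corollary~\ref{cor1} gives $\int_{J_i}(x-a(i))^2\,dP=\frac12 r^2V$ for $i=1,2$; likewise $V_4=r^4V$ from the case $n=4$, while $V_3$ is kept as it stands.

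Finally I would sum over $\omega$ according to the multiplicities prescribed by Definition~\ref{def2}: for $n=2^{\ell(n)}$ all $2^{\ell(n)-1}$ cylinders carry $S_\omega(\gamma_2)$, giving the total $2^{\ell(n)-1}\cdot 2^{-(\ell(n)-1)}r^{2(\ell(n)-1)}V_2=r^{2\ell(n)}V$; for $2^{\ell(n)}<n\le 3\cdot 2^{\ell(n)-1}$ there are $\text{card}(I)=n-2^{\ell(n)}$ cylinders carrying $S_\omega(\gamma_3)$ and $3\cdot 2^{\ell(n)-1}-n$ carrying $S_\omega(\gamma_2)$; and for $3\cdot 2^{\ell(n)-1}<n< 2^{\ell(n)+1}$ there are $\text{card}(I)=n-3\cdot 2^{\ell(n)-1}$ cylinders carrying $S_\omega(\gamma_4)$ and $2^{\ell(n)+1}-n$ carrying $S_\omega(\gamma_3)$. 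Multiplying the per-cylinder value $2^{-(\ell(n)-1)}r^{2(\ell(n)-1)}V_j$ by these multiplicities and adding gives exactly the three displayed formulas. The step I expect to require the most care is the no-crossing claim of the second paragraph: one must check that every adjacency type occurring in the two regimes of Definition~\ref{def2} is covered, but after invoking the reductions already carried out in the proof of Proposition~\ref{prop51} this is routine, and everything else is scaling and bookkeeping.
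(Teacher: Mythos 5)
Your proposal is correct and follows essentially the same route as the paper: decompose the integral over the generation-$(\ell(n)-1)$ cylinders, reduce each local term to $V_j=V(P,\gamma_j)$ by self-similarity, and count multiplicities. The only difference is that you explicitly justify the no-crossing of Voronoi regions across cylinder boundaries (via the separation inequalities from Proposition~\ref{prop51}), a step the paper's proof uses implicitly when it writes the decomposition; this is a welcome extra precision rather than a change of method.
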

\begin{proof}
  For $n=2^{\ell(n)}$, we have
\begin{align*}
\sum_{\go\in \set{1,2}^{\ell(n)}}\mathop{\int}\limits_{J_\go} (x-a(\go))^2 dP=\frac 1{2^{\ell(n)}}\sum_{\go\in \set{1,2}^{\ell(n)}} \int (x-a(\go))^2 d(P\circ S_\go^{-1})=r^{2\ell(n)}V.
\end{align*}
For $2^{\ell(n)}< n\leq 3 \cdot 2^{\ell(n)-1}$,
\begin{align*}
& \int \min_{a\in \gg_n(I)}(x-a)^2 dP=\sum_{\go \in I}\mathop{\int}\limits_{J_\go}  \min_{a\in S_\go(\gg_3)}(x-a)^2 dP+\sum_{\go \in \set{1, 2}^{\ell(n)-1}\setminus I}\mathop{\int}\limits_{J_\go} \min_{a\in S_\go(\gg_2)}(x-a)^2 dP\\
&=\sum_{\go \in I}\frac 1{2^{\ell(n)-1}} \int  \min_{a\in S_\go(\gg_3)}(x-a)^2 d(P\circ S_\go^{-1})\\
& \qquad \qquad +\sum_{\go \in \set{1, 2}^{\ell(n)-1}\setminus I}\frac 1{2^{\ell(n)-1}} \int  \min_{a\in S_\go(\gg_2)}(x-a)^2 d(P\circ S_\go^{-1})\\
&=\sum_{\go \in I}\frac 1{2^{\ell(n)-1}}r^{2(\ell(n)-1)}V_3+\sum_{\go \in \set{1, 2}^{\ell(n)-1}\setminus I}\frac 1{2^{\ell(n)-1}}r^{2(\ell(n)-1)}V_2\\
&=\frac 1{2^{\ell(n)-1}}r^{2(\ell(n)-1)} \Big(V_3\, \te{card}(I)+V_2 \,\te{card}(\set{1, 2}^{\ell(n)-1}\setminus I)\Big)\\
&=\frac 1{2^{\ell(n)-1}}r^{2(\ell(n)-1)} \Big(V_3\, (n-2^{\ell(n)}) +V_2 \,(3\cdot 2^{\ell(n)-1}-n)\Big).
\end{align*}

For $3 \cdot 2^{\ell(n)-1}<n< 2^{\ell(n)+1}$,
\begin{align*}
& \int \min_{a\in \gg_n(I)}(x-a)^2 dP=\sum_{\go \in \set{1, 2}^{\ell(n)-1}\setminus I}\mathop{\int}\limits_{J_\go} \min_{a\in S_\go(\gg_3)}(x-a)^2 dP+\sum_{\go \in I}\mathop{\int}\limits_{J_\go} \min_{a\in S_\go(\gg_4)}(x-a)^2 dP\\
&=\sum_{\go \in \set{1, 2}^{\ell(n)-1}\setminus I}\frac 1{2^{\ell(n)-1}} \mathop{\int} \min_{a\in S_\go(\gg_3)}(S_\go(x)-a)^2 dP+\sum_{\go \in I}\frac 1{2^{\ell(n)-1}}\mathop{\int} \min_{a\in S_\go(\gg_4)}(S_\go(x)-a)^2 dP\\
&=\frac 1{2^{\ell(n)-1}}r^{2(\ell(n)-1)} \Big(V_3\, \te{card}(\set{1, 2}^{\ell(n)-1}\setminus I)+V_4 \,\te{card}(I)\Big)\\
&=\frac 1{2^{\ell(n)-1}}r^{2(\ell(n)-1)} \Big(V_3\, (2^{\ell(n)+1}-n) +V_4\,(n-3\cdot 2^{\ell(n)-1})\Big).
\end{align*}
Thus, the proof of the proposition is complete.
\end{proof}

\section{Optimal sets of $n$-means for $r=0.4350411707$ and $n\geq 2$} \label{sec344}

Recall that $\gb_n(I)$ forms a CVT if $r=0.4350411707$. In this section, we state and prove the following theorem.
\begin{theorem} \label{Th1}
Let $n\geq 2$, and let $\gb_n(I)$ be the set given by Definition~\ref{def1}. Then, $\gb_n(I)$ forms an optimal set of $n$-means for $r=0.4350411707$.
\end{theorem}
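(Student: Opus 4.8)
The plan is to prove Theorem~\ref{Th1} by induction on $n$, exploiting the self-similar structure of $P$ together with the explicit value $r = 0.4350411707$, and to reduce everything to a finite (if tedious) list of base cases and comparison inequalities. The key structural observation is that $P$ splits as $\frac12 P\circ S_1^{-1} + \frac12 P\circ S_2^{-1}$, so an optimal set of $n$-means ought to split its points between the two halves $J_1$ and $J_2$ in some proportion $n = n_1 + n_2$, with the points in $J_i$ forming (after applying $S_i^{-1}$) a set that is, by Lemma~\ref{lemma0001} and its corollary together with the scaling in Corollary~\ref{cor1}, essentially an optimal set of $n_i$-means for $P$. By symmetry under $x\mapsto 1-x$ one may assume $|n_1 - n_2|\le 1$, and then one must rule out that any point of the optimal set straddles the gap $(r, 1-r)$ between $J_1$ and $J_2$; this is where one uses that $r$ is small enough (indeed $r<\frac12$, and more precisely the CVT bound $r\le\frac{5-\sqrt{17}}{2}$ recalled in the introduction) so that no centroid can efficiently serve mass on both sides. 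Granting this split, the distortion of an optimal $n$-set decomposes as $\frac12 r^2\big(V_{n_1} + V_{n_2}\big)$-type sums, and minimizing over the split $n_1+n_2 = n$ with $|n_1-n_2|\le 1$ recovers exactly the recursive definition of $\gb_n(I)$: when $n = 2^{\ell(n)}$ every basic interval of level $\ell(n)$ gets one point, and otherwise $n - 2^{\ell(n)}$ of them get two.

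Concretely, the steps I would carry out are: first, establish the base cases $n = 2$ and $n = 3$ directly — for $n=2$ the set $\{a(1),a(2)\}$ is forced by the symmetry of $P$ and Proposition~\ref{prop0}(iii), and for $n=3$ one checks by an explicit computation (of the competing configurations: two points in one half and one in the other, versus the $\gb_3$ candidate $\{a(1), a(21), a(22)\}$) that $\gb_3$ wins at $r = 0.4350411707$. Second, prove the \emph{no-straddling lemma}: if $\alpha$ is an optimal set of $n$-means, $n\ge 2$, then $\alpha\cap M(a\mid\alpha)$ for each $a$ lies entirely in $\ol{J_1}$ or entirely in $\ol{J_2}$, equivalently $\alpha$ partitions as $\alpha_1\uu\alpha_2$ with $\alpha_i\ci \ol{J_i}$ and $P$-a.e.\ point of $J_i$ closer to $\alpha_i$; this follows from the gap estimate comparing the error of a straddling point to that of splitting it, using $r<\frac12$. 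Third, run the induction: assuming $\gb_m(I)$ is optimal for all $m<n$, use Corollary~\ref{cor1} to write $V(P,\alpha) = \frac12 r^2\big(V(P\circ S_1^{-1}\text{-induced},\alpha_1') + (\cdots)_2\big) + \text{variance terms}$, observe $V_{n_i}$ is realized by $S_i(\gb_{n_i}(I))$, and minimize $n_1 V_{n_1}$-style sum over $n_1+n_2=n$; the convexity/monotonicity of the sequence $(V_k)$ forces $|n_1-n_2|\le 1$, and one reads off that the minimizer is $\gb_n(I)$.

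The main obstacle I anticipate is twofold. The delicate part is the no-straddling lemma at a value of $r$ as large as $0.435$: one is not far below the threshold $\frac{5-\sqrt{17}}{2}\approx 0.4384$ where even the CVT property of $\gb_n(I)$ starts to fail, so the margin in the gap estimate is thin and the inequality must be checked with the precise numerical value of $r$ rather than a crude bound — this is presumably why the theorem is stated at one specific $r$ rather than for a range. The second obstacle is bookkeeping: when $n$ is strictly between $2^{\ell(n)}$ and $2^{\ell(n)+1}$ the optimal split at the top level is itself unbalanced in a way that must match the combinatorics of choosing the subset $I$, and one has to verify that \emph{every} choice of $I$ of the prescribed cardinality gives the same distortion (this is exactly the content of the formula in Definition~\ref{def1}, which I would invoke) and that no other, non-recursive configuration — e.g.\ one that refines to level $\ell(n)+2$ somewhere — can do better; ruling out the latter again comes down to the monotonicity properties of $(V_k)$ at this specific $r$, which I would extract from Proposition~\ref{prop34}-type computations applied to $\gb$ rather than $\gg$.
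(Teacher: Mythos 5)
Your overall strategy is the same as the paper's: explicit base cases, a ``no-straddling'' step showing an optimal set has no point in $(r,1-r)$ and that the Voronoi regions do not cross the gap, the splitting identity $V_n=\tfrac12 r^2(V_j+V_{n-j})$, and then the Graf--Luschgy combinatorial minimization. Two remarks, one of which is a genuine error in your plan.

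First, the concrete error: your claim that one may assume $|n_1-n_2|\le 1$ (``by symmetry'' and later because ``convexity/monotonicity of $(V_k)$ forces'' it) is false. Symmetry only gives $n_1\ge n_2$. More importantly, on the range $2^{\ell-1}\le k\le 2^{\ell}$ the sequence $V_k$ is \emph{affine} in $k$ (this is exactly the formula in Definition~\ref{def1}), so $V_j+V_{n-j}$ is \emph{constant} on the whole interval of splits $2^{\ell(n)-1}\le j\le 2^{\ell(n)}$, $2^{\ell(n)-1}\le n-j<2^{\ell(n)}$ --- it is not strictly convex and does not single out the balanced split. For example, for $n=6$ one has $V_4+V_2=2V_3=r^2V(1+r^2)$, and the set $\gb_6(I)$ with $I=\{11,12\}$ splits $4+2$ between $J_1$ and $J_2$; your restriction would exclude it even though it is optimal. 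The correct constraint to derive (and the one the paper imports from Theorem~5.2 of Graf--Luschgy) is precisely $2^{\ell(n)-1}\le j\le 2^{\ell(n)}$ and $2^{\ell(n)-1}\le n-j<2^{\ell(n)}$. Since the balanced split does lie in the minimizing set, your argument can still be repaired to prove optimality of \emph{some} $\gb_n(I)$, but as written it would mischaracterize the minimizers and fail to cover all admissible $I$.

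Second, a caution rather than an error: the step you compress into ``the no-straddling lemma \ldots follows from the gap estimate \ldots using $r<\tfrac12$'' is where essentially all of the paper's labor lies. The paper explicitly notes that the Graf--Luschgy splitting lemma does \emph{not} hold for all $r<\tfrac12$ without the added hypothesis that no Voronoi region crosses the gap, and verifying that hypothesis (together with $\ga_n\cap(r,1-r)=\es$) requires a case analysis with numerically sharp distortion comparisons at $r=0.4350411707$, carried out separately for small $n$ ($n\le 15$ in one proposition, $n\le 7$ in another) before a uniform argument takes over. You correctly anticipate that the margin is thin, but you should not expect this to follow from a single crude gap estimate.
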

To prove the theorem, we need some basic lemmas and propositions.

The following two lemmas are true. Due to technicality the proofs of them are not shown in the paper.
 \begin{lemma}\label{lemma11} Let $\ga:=\{a_1, a_2\}$ be an optimal set of two-means, $a_1<a_2$. Then, $a_1=a(1)=S_1(\frac 12) =0.2175$, $a_2=a(2)=S_2(\frac 12)=0.7825$, and the corresponding quantization error is $V_2=r^2 V=0.0186274$.
\end{lemma}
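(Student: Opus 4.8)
The strategy is to squeeze $V_2$: exhibit a two--point set of distortion $r^2V$ (so $V_2\le r^2V$), then show every optimal two--point set has distortion $\ge r^2V$ and in fact equals $\{a(1),a(2)\}$. For the upper bound, first I would check that $\{a(1),a(2)\}=\{S_1(\frac12),S_2(\frac12)\}$ is a CVT: its Voronoi boundary is the midpoint $\frac12\big(S_1(\frac12)+S_2(\frac12)\big)=\frac12$, which lies in the empty gap $(r,1-r)$ because $r<\frac12$, so up to a $P$--null set $M(a(1)\mid\{a(1),a(2)\})\cap C=J_1\cap C$ and $M(a(2)\mid\{a(1),a(2)\})\cap C=J_2\cap C$, while $a(i)=E(X\mid X\in J_i)$ by Note~\ref{note1}; hence both conditions of Definition~\ref{defi00} hold. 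By Corollary~\ref{cor1} with $k=1$, $\int_{J_1}(x-a(1))^2\,dP=\int_{J_2}(x-a(2))^2\,dP=\frac12 r^2V$, so $V(P,\{a(1),a(2)\})=r^2V$ and $V_2\le r^2V$. With $r=0.4350411707$ and $V=\frac{1-r}{4(1+r)}$ (Lemma~\ref{lemma2}) this yields $a(1)=\frac r2=0.2175$, $a(2)=1-\frac r2=0.7825$ and $r^2V=0.0186274$.

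For the reverse inequality and uniqueness, let $\ga=\{a_1,a_2\}$, $a_1<a_2$, be an optimal set of two--means. By Proposition~\ref{prop0} its Voronoi regions are $(-\infty,m]$ and $[m,\infty)$ with $m=\frac12(a_1+a_2)$, both of positive $P$--measure, and $a_1=E(X\mid X\le m)$, $a_2=E(X\mid X\ge m)$. Put $w=P((-\infty,m])$ and $c_1=a_1$. Since $P$ is atomless with $E(X)=\frac12$, we get $a_2=\frac{\frac12-wc_1}{1-w}$, hence $a_2-a_1=\frac{\frac12-c_1}{1-w}$; and because $a_1,a_2$ are the conditional means, $V(P,\ga)=w\,\mathrm{Var}(X\mid X\le m)+(1-w)\,\mathrm{Var}(X\mid X\ge m)=V-w(1-w)(a_2-a_1)^2$ by the within/between--groups decomposition of the variance. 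Thus
\[V(P,\ga)=V-g(m),\qquad g(m):=\frac{w}{1-w}\Big(\frac12-c_1\Big)^2.\]
By Lemma~\ref{lemma2}, $V-r^2V=V(1-r^2)=\frac14(1-r)^2$, so the lemma is equivalent to: $\sup_m g(m)=\frac14(1-r)^2$, attained exactly for $m\in[r,1-r]$. For $m$ in the gap $P$ puts no mass on $(r,1-r)$, so $w=\frac12$ and $c_1=a(1)=\frac r2$, giving $g(m)=\frac14(1-r)^2$; moreover $c_1$ (and by symmetry $c_2$) are then the constants $a(1),a(2)$, so it suffices to prove $g(m)<\frac14(1-r)^2$ for $m\notin[r,1-r]$, and optimality will then force $m\in[r,1-r]$, whence $\ga=\{a(1),a(2)\}$.

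The remaining estimate is the technical heart. The reflection $x\mapsto1-x$ preserves $P$ and $g(1-m)=g(m)$, so assume $0<m<r$; then $w<\frac12$ and $c_1=E(X\mid X\le m)\in(0,m)\subseteq(0,r)$, so $\frac{w}{1-w}<1$ and $\frac12-c_1>\frac{1-2r}{2}>0$. If $w\le\frac{(1-r)^2}{1+(1-r)^2}$ then $\frac{w}{1-w}\le(1-r)^2$, so $g(m)=\frac{w}{1-w}\cdot\frac{(1-2c_1)^2}{4}<\frac14(1-r)^2$ (strict since $0<1-2c_1<1$); thus the only real work is when $w$ is close to $\frac12$, i.e. $m$ close to $r$. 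There I would invoke the self--similarity $P=\frac12P\circ S_1^{-1}+\frac12P\circ S_2^{-1}$ (Lemma~\ref{lemma1}): writing $m=S_1(m')$ with $m'\in(0,1)$ and letting $u=P([0,m'])$, $\widehat c=E(X\mid X\le m')$ be the corresponding quantities for $P$ itself, one finds $w=\frac u2$, $c_1=r\widehat c$, and
\[g(m)=\frac{u}{2-u}\Big(\frac12-r\widehat c\Big)^2,\]
so the target inequality becomes $\frac{u}{2-u}\big(\frac12-r\widehat c\big)^2<\frac14(1-r)^2$ for every $m'\in(0,1)$; as $m'\to1$ one has $u\to1$, $\widehat c\to\frac12$ and the left side tends to $\frac14(1-r)^2$, matching $g(r)$. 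Iterating this reduction --- each deeper descent into $J_1$ or $J_2$ replaces $u$ by $\frac12u$ or $\frac12(1+u)$ and composes one more $S_i$ --- one reaches after finitely many steps a bounded parameter range checked by elementary estimates from Corollary~\ref{cor1} and Lemma~\ref{lemma2}, and one verifies along the way that on every gap interval other than $(r,1-r)$ the value of $g$ is an explicit constant strictly below $\frac14(1-r)^2$ (this is where a hypothesis such as $r<\sqrt3-1$ would be used, and it is comfortably satisfied here). I expect this last point --- ruling out any second cut, strictly inside $J_1$, where $g$ climbs back to $\frac14(1-r)^2$, rather than merely controlling $g$ near the gap --- to be the genuine obstacle: it depends essentially on the self--affine structure of $P$ and does not follow from the symmetry of $P$ together with the concavity of $w\mapsto w(1-w)$, which is presumably why the authors describe the argument as technical and omit it.
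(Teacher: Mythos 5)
The paper itself gives no proof of this lemma (it is stated with ``due to technicality the proofs of them are not shown''), so there is nothing to compare against; your proposal must stand on its own, and as written it does not. The upper bound, the variance decomposition $V(P,\ga)=V-\frac{w}{1-w}\bigl(\frac12-c_1\bigr)^2$, the identity $V(1-r^2)=\frac14(1-r)^2$, and the reduction to showing $g(m)<\frac14(1-r)^2$ for every cut point $m\notin[r,1-r]$ are all correct and form a clean framework. But the last claim --- which is the entire content of the lemma --- is not proved. Your ``easy case'' $\frac{w}{1-w}\le(1-r)^2$ only covers $w\le\frac{(1-r)^2}{1+(1-r)^2}\approx 0.242$, which for $r=0.4350411707$ does not even reach the first gap of $J_1$ (where $w=\frac14$), let alone the cuts in $J_{12},J_{122},J_{1222},\dots$ where $w=\frac38,\frac7{16},\frac{15}{32},\dots\to\frac12$ and $g$ climbs monotonically toward $\frac14(1-r)^2$ (numerically $0.0699,\,0.0752,\,0.0776,\dots\to 0.0798$). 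Ruling out that this sequence, or a cut interior to one of these cylinders, ever reaches the limiting value requires a uniform estimate over infinitely many levels.

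Your proposed mechanism for that estimate does not close. After one application of self-similarity the quantity to bound becomes $\frac{u}{2-u}\bigl(\frac12-r\widehat c\,\bigr)^2$, which is \emph{not} of the same form as $g(m')=\frac{u}{1-u}\bigl(\frac12-\widehat c\,\bigr)^2$, so the reduction does not telescope into an induction on depth; moreover the two factors pull in opposite directions ($\widehat c\le\frac12$ forces $(\frac12-r\widehat c)^2\ge\frac14(1-r)^2$, so everything rests on $\frac{u}{2-u}<1$ decaying fast enough), and the assertion that ``after finitely many steps'' one lands in ``a bounded parameter range checked by elementary estimates'' is never substantiated --- the cut point can sit arbitrarily deep in the Cantor structure. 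You acknowledge this yourself (``I expect this last point \dots to be the genuine obstacle''), which is an honest admission that the argument is a plan, not a proof. To complete it you would need either an explicit closed form for $c_1$ at level $k$ together with a verified inequality for all $k$ (plus control of $g$ on the interiors of the cylinders, not only at the gaps), or the Graf--Luschgy style case analysis that compares the distortion of a candidate with misplaced Voronoi boundary against $r^2V$ level by level until the residual mass is negligible.
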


\begin{lemma} \label{lemma12}
The sets $\set{a(1), a(21), a(22)}$, and $\set{a(11), a(12), a(2)}$ form two optimal sets of three-means with quantization error $V_3=0.0110764$.
\end{lemma}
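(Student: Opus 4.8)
The plan is to show both sets achieve the minimal three-means error $0.0110764$ in two halves: an upper bound (the sets are admissible CVTs whose error I compute directly) and a matching lower bound (no three-point set does better). Since the Cantor distribution $P$ is symmetric about $\tfrac12$ and the two sets $\set{a(1),a(21),a(22)}$ and $\set{a(11),a(12),a(2)}$ are reflections of one another across $\tfrac12$, they carry the same distortion, so it suffices to treat $\ga:=\set{a(1),a(21),a(22)}$. First I would verify that $\ga$ is a CVT whose Voronoi regions are exactly $J_1$, $J_{21}$ and $J_{22}$: the two separating midpoints are $\tfrac12(a(1)+a(21))$ and $\tfrac12(a(21)+a(22))$, and a direct evaluation shows each lies in an empty gap (respectively $(S_1(1),S_2(0))$ and $(S_{21}(1),S_{22}(0))$) for $r=0.4350411707$. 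Since $a(1)=S_1(\tfrac12)$, $a(21)=S_{21}(\tfrac12)$, $a(22)=S_{22}(\tfrac12)$ are the centroids of their regions (Note~\ref{note1}), Corollary~\ref{cor1} gives the error term by term: $\int_{J_1}(x-a(1))^2\,dP=\tfrac12 r^2V$ and $\int_{J_{2i}}(x-a(2i))^2\,dP=\tfrac14 r^4V$ for $i=1,2$, so $V(P,\ga)=\tfrac12 r^2V(1+r^2)$. With $V=\frac{1-r}{4(1+r)}$ from Lemma~\ref{lemma2} and $r=0.4350411707$ this equals $0.0110764$, giving $V_3(P)\le 0.0110764$.

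For the lower bound I would use that an optimal set $\set{b_1<b_2<b_3}$ is a CVT (Proposition~\ref{prop0}) and split over the first level via $P=\tfrac12 P\circ S_1^{-1}+\tfrac12 P\circ S_2^{-1}$. Writing $\ga_i$ for the points whose Voronoi region meets $J_i$ and $n_i=\te{card}(\ga_i)$, the substitution $x=S_i(y)$ together with $\|S_i(y)-S_i(b)\|=r\|y-b\|$ turns each half into a scaled copy of the original problem, yielding $V(P,\set{b_1,b_2,b_3})\ge \tfrac12 r^2 V_{n_1}(P)+\tfrac12 r^2 V_{n_2}(P)$ with $n_1,n_2\ge 1$. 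In the non-crossing case $n_1+n_2=3$, so $(n_1,n_2)\in\set{(1,2),(2,1)}$; Lemma~\ref{lemma11} together with Lemma~\ref{lemma0001} and Corollary~\ref{cor020} then identify the optimal placement in the two-point half as $S_i(\set{a(1),a(2)})$ and the single point as the centroid of the other half, i.e.\ the optimizer is exactly one of the two sets in the statement, with error $\tfrac12 r^2V(1+r^2)=0.0110764$.

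The \emph{main obstacle} is the crossing case, in which the middle region $M(b_2|\set{b_1,b_2,b_3})$ straddles the empty gap $(S_1(1),S_2(0))$ and hence meets both $J_1$ and $J_2$; here $n_1=n_2=2$ and the crude bound $\tfrac12 r^2(V_2+V_2)=r^4V$ falls strictly below $0.0110764$, so first-level splitting alone cannot exclude it. I would handle this by showing that the best crossing CVT is the set $\gg_3=\set{a(11,121),a(122,211),a(212,22)}$ of Definition~\ref{def2} — whose middle point is the centroid of the straddling union $J_{122}\cup J_{211}$ — and then evaluating $V(P,\gg_3)$ directly. Using Corollary~\ref{cor1} on the six third-level intervals (grouped as $J_{11}\cup J_{121}$, $J_{122}\cup J_{211}$, $J_{212}\cup J_{22}$) and the symmetry of the outer two groups about $\tfrac12$, the comparison $V(P,\gg_3)\ge V(P,\ga)$ reduces, after clearing denominators, precisely to the identity
\[(r-1)(r^4+r^2)=\frac{r^7+r^6+4r^5-2r^4-2r^3-8r^2+9r-3}{6}\]
defining $r_0=0.4350411707$ in Theorem~\ref{maintheorem}. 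At $r=r_0$ the two sides are equal, so $V(P,\gg_3)=V(P,\ga)=0.0110764$: the crossing competitor ties the candidate but does not beat it. Combining the non-crossing and crossing analyses gives $V_3(P)\ge 0.0110764$, matching the upper bound; hence both $\set{a(1),a(21),a(22)}$ and $\set{a(11),a(12),a(2)}$ are optimal sets of three-means with quantization error $V_3=0.0110764$. The delicate point throughout is establishing that $\gg_3$ is the optimal crossing configuration and executing the straddling-union variance computation cleanly, since this is exactly where the critical value $r_0$ is forced.
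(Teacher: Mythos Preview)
The paper does not actually give a proof of Lemma~\ref{lemma12}: immediately before Lemma~\ref{lemma11} it says ``Due to technicality the proofs of them are not shown in the paper,'' and this applies to both Lemma~\ref{lemma11} and Lemma~\ref{lemma12}. So there is no argument in the paper to compare against; one can only judge your proposal on its own merits, keeping in mind that Lemma~\ref{lemma13}, Proposition~\ref{prop2}, Proposition~\ref{prop3} and Theorem~\ref{Th1} all \emph{use} Lemma~\ref{lemma12} and are therefore off-limits here.

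Your upper bound and your non-crossing lower bound are fine. The genuine gap is in the crossing case: you assert that you ``would handle this by showing that the best crossing CVT is the set $\gg_3$,'' but you give no mechanism for establishing that claim, and it is exactly the hard part. The equality $V(P,\gg_3)=V(P,\ga)$ at $r_0$ (which is essentially Case~1 of Proposition~\ref{prop4}) only tells you that this one particular crossing CVT fails to beat $\ga$; it does not exclude other crossing CVTs. A priori there can be several: the middle Voronoi boundary $m_1=\tfrac12(b_1+b_2)$ could fall at various depths of the Cantor construction (not necessarily between $J_{121}$ and $J_{122}$), and there is no a priori reason the optimal crossing CVT must be symmetric about $\tfrac12$. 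To close the argument you must either enumerate/parametrize all crossing CVTs and bound their distortions below by $V(P,\gg_3)$, or bypass the CVT classification entirely. The latter is what the paper does elsewhere (see the proof of Lemma~\ref{lemma15}): one pins down numerical ranges for $a_1,a_2,a_3$ from the bound $V_3\le 0.0110764$, then splits into subcases according to where $a_2$ lies, and in each subcase produces an explicit lower bound via Corollary~\ref{cor1} that exceeds $0.0110764$. Your proposal identifies the obstacle correctly but does not supply the missing step.
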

We now prove the following lemma.
\begin{lemma} \label{lemma13}
Let $\ga_n$ be an optimal set of $n$-means for $n\geq 2$. Then, $\ga_n\ii [0, r)\neq \es$ and $\ga_n\ii (1-r, 1]\neq \es$.
\end{lemma}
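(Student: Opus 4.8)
The plan is to argue by contradiction, exploiting the self-similar structure of $P$ together with Lemma~\ref{lemma13}'s own conclusion on smaller optimal sets, so the proof is really an induction on $n$. Recall that $P(J_1)=P(J_2)=\frac 12$, that $J_1=[0,r]$ and $J_2=[1-r,1]$ are separated by the gap $(r,1-r)$ which carries no mass, and that by Lemma~\ref{lemma2} the global mean is $\frac 12$. First I would dispose of the small cases by hand: for $n=2$, Lemma~\ref{lemma11} gives $\ga_2=\{a(1),a(2)\}=\{0.2175,0.7825\}$, and since $r=0.4350411707$ we have $a(1)<r$ and $a(2)>1-r$, so both intersections are nonempty; for $n=3$, Lemma~\ref{lemma12} exhibits optimal sets each of which contains a point in $[0,r)$ (e.g. $a(1)$ or $a(11),a(12)$) and a point in $(1-r,1]$.

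For the inductive step, suppose $n\ge 4$ and, aiming for a contradiction, that some optimal set $\ga_n$ of $n$-means satisfies $\ga_n\cap[0,r)=\es$ (the case $\ga_n\cap(1-r,1]=\es$ is symmetric under $x\mapsto 1-x$, which preserves $P$). Then every point of $\ga_n$ lies in $[r,1]$, hence in $[1-r,1]=J_2$ after noting that the gap $(r,1-r)$ is $P$-null and any point of $\ga_n$ strictly inside the gap could be pushed to the nearer endpoint without increasing the distortion, contradicting optimality via Proposition~\ref{prop0}(iii) (a mean of a set of positive measure cannot sit in a null gap between the two halves of the support). So in fact $\ga_n\subset J_2$. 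Now estimate $V(P,\ga_n)$ from below: since no point of $\ga_n$ lies in $J_1$, the mass in $J_1$ is served entirely by the closest point $a^\ast\in\ga_n$, which lies in $J_2\subset[1-r,1]$, so
\begin{align*}
V(P,\ga_n)\ \ge\ \int_{J_1}(x-a^\ast)^2\,dP(x)\ \ge\ \int_{J_1}\big(x-(1-r)\big)^2\,dP(x),
\end{align*}
because $1-r$ is the point of $J_2$ nearest to $J_1$ and on $J_1$ the integrand is increasing in $a^\ast$ over $[1-r,1]$. Using Lemma~\ref{lemma1} (or Corollary~\ref{cor1}) this last integral equals $\frac 12\big(r^2V+(S_1(\frac12)-(1-r))^2\big)=\frac 12\big(r^2V+(\frac r2-(1-r))^2\big)$, a fixed positive constant depending only on $r$.

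On the other hand, I would produce a competitor set $\ga'$ of $n$-means that does meet both $[0,r)$ and $(1-r,1]$ and whose distortion is strictly smaller than the lower bound just obtained. The natural choice is $\ga'=S_1(\ga_{\lfloor n/2\rfloor})\cup S_2(\ga_{\lceil n/2\rceil})$, where $\ga_k$ denotes an optimal set of $k$-means; by Lemma~\ref{lemma1}, $V(P,\ga')\le \frac12 r^2 V_{\lfloor n/2\rfloor}+\frac12 r^2 V_{\lceil n/2\rceil}\le r^2 V_2$ for $n\ge 4$ (since $V_k\le V_2$ for $k\ge 2$), and $r^2V_2=r^4V$ is far below the constant $\frac 12\big(r^2V+(\frac{3r}{2}-1)^2\big)$ for $r=0.4350411707$ — this is where the specific numerical value of $r$ enters and must be checked. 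Moreover $S_1(\ga_{\lfloor n/2\rfloor})\subset J_1=[0,r)$ is nonempty and $S_2(\ga_{\lceil n/2\rceil})\subset J_2=(1-r,1]$ is nonempty, so $\ga'$ has the desired property; this contradicts the optimality of $\ga_n$. The main obstacle is the bookkeeping: one must verify that the crude lower bound $\int_{J_1}(x-(1-r))^2\,dP$ genuinely exceeds $r^2V_2$ at the given $r$ (a one-line numerical inequality once $V=\frac{1-r}{4(1+r)}$ is substituted), and one should be slightly careful that ``no point in $[0,r)$'' really forces ``all points in $[1-r,1]$'' — handled, as above, by the null-gap argument combined with Proposition~\ref{prop0}(iii). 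Everything else is a routine application of the scaling lemmas already established.
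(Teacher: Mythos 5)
Your overall strategy is the same as the paper's: assume the leftmost point $a_1$ of $\ga_n$ satisfies $a_1\ge r$, bound the cost of serving the mass of $J_1$ from below by a fixed constant, and contradict a universal upper bound on $V_n$ of size $r^2V_2=r^4V\approx 0.0035$. (The paper obtains that upper bound from the explicit four-point competitor $\set{a(\gs):\gs\in\set{1,2}^2}$, which has distortion exactly $r^2V_2$; your competitor $S_1(\ga_{\lfloor n/2\rfloor})\uu S_2(\ga_{\lceil n/2\rceil})$ gives the same number. The "induction" framing is cosmetic — your step never uses the inductive hypothesis, only existence of optimal sets and $V_k\le V_2$.)

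There is, however, one genuinely flawed step: the "null-gap" argument by which you upgrade the lower bound from $\int_{J_1}(x-r)^2\,dP$ to $\int_{J_1}\big(x-(1-r)\big)^2\,dP$. The parenthetical justification — that a centroid of a positive-measure Voronoi region cannot lie in the gap $(r,1-r)$ — is false in general: the optimal one-mean is $\frac12$, dead center of the gap, and more to the point, if the leftmost point's Voronoi region captures all of $J_1$ together with part of $J_2$, its centroid is a convex combination of $a(1)=\frac r2$ and points of $J_2$, which can perfectly well land in $[r,1-r)$. Ruling out points of $\ga_n$ in the gap is exactly the content of Proposition~\ref{prop2}, which the paper proves \emph{after} and \emph{using} this lemma, so you cannot assume it here. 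Fortunately the upgrade is unnecessary: from $\ga_n\ii[0,r)=\es$ alone you get $a^\ast\ge r$, hence
\begin{equation*}
V_n\ \ge\ \int_{J_1}(x-r)^2\,dP\ =\ \tfrac12\Big(r^2V+\big(\tfrac r2-r\big)^2\Big)\ \approx\ 0.03297,
\end{equation*}
which already exceeds $r^2V_2\approx 0.00352544$. This is precisely the bound the paper uses. Delete the null-gap step, replace $1-r$ by $r$ in your lower bound, and the argument is correct and essentially identical to the paper's.
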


\begin{proof} For $n=2$ and $n=3$, the statement of the lemma follows from Lemma~\ref{lemma11} and Lemma~\ref{lemma12}.   Let us now prove that the lemma is true for $n\geq 4$. Consider the set of four points $\gb$ given by $\gb:=\set{a(\gs) : \gs \in \set{1, 2}^2}$. Then,
\[\int\min_{a \in \gb} (x-a)^2 dP=\sum_{\gs \in \set{1, 2}^2} \int_{J_{\gs}}(x-a(\gs))^2 dP=0.00352544.\]
Since $V_n$ is the $n$th quantization error for $n\geq 4$, we have $V_n\leq V_{4}\leq 0.00352544$.
Let $\ga_n$ be an optimal set of $n$-means. Write $\ga_n:=\set{a_1, a_2, \cdots, a_n}$, where $0< a_1<a_2<\cdots <a_n< 1$.
If $a_1\geq r$, using Corollary~\ref{cor1}, we have
\[V_n\geq \int_{J_1}(x-a_1)^2 dP\geq \int_{J_1}(x-r)^2 dP=\frac 12 \Big(r^2V+(a(1)-r)^2\Big)=0.0329713>V_{4}\geq V_n,\]
which is a contradiction. Thus, we can assume that $a_1<r$. Similarly, we can show that $a_n>(1-r)$. Thus, we see that if $\ga_n$ is an optimal set of $n$-means with $n\geq 2$, then
$\ga_n\ii [0, r)\neq \es$ and $\ga_n\ii (1-r, 1]\neq \es$. Thus, the lemma is yielded.
\end{proof}

The following lemma is a modified version of Lemma~4.5 in \cite{GL3}, and the proof follows similarly.
\begin{lemma} \label{lemma14}
Let $n\geq 2$, and let $\ga_n$ be an optimal set of $n$-means such that $\ga_n\ii J_1\neq \es$, $\ga_n\ii J_2\neq \es$, and $\ga_n\ii (r, 1-r)=\es$. Further assume that the Voronoi region of any point in $\ga_n\ii J_1$ does not contain any point from $J_2$, and the Voronoi region of any point in $\ga_n\ii J_2$ does not contain any point from $J_1$. Set $\ga_1:=\ga_n\ii J_1$ and $\ga_2:=\ga_n\ii J_2$, and $j:=\te{card}(\ga_1)$.
Then, $S_1^{-1}(\ga_1)$ is an optimal set of $j$-means and $S_2^{-1}(\ga_2)$ is an optimal set of $(n-j)$-means. Moreover,
\[V_n=\frac 12 r^2(V_j+V_{n-j}).\]
\end{lemma}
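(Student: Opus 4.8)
The plan is to split the integral defining $V(P,\ga_n)$ according to the partition $\{J_1, J_2\}$ of the support and to use the geometric hypotheses to show that the two pieces decouple completely. Since $\ga_n \ii (r, 1-r) = \es$ and every point of $\ga_n$ lies in $J_1 \uu J_2$, we have $\ga_n = \ga_1 \uu \ga_2$ with $\ga_1 = \ga_n \ii J_1$ and $\ga_2 = \ga_n \ii J_2$ disjoint and both nonempty. The extra hypothesis on Voronoi regions guarantees that for $P$-a.e.\ $x \in J_1$ the nearest point of $\ga_n$ to $x$ lies in $\ga_1$, and symmetrically for $J_2$; indeed a point $x \in J_1$ whose nearest element of $\ga_n$ lay in $\ga_2$ would put $x$ in the Voronoi region of a point of $\ga_2$, contradicting the assumption, and the boundary set has $P$-measure zero by Proposition~\ref{prop0}(ii). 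Hence
\[
V(P,\ga_n) = \int_{J_1} \min_{a \in \ga_1}(x-a)^2\, dP(x) + \int_{J_2} \min_{a \in \ga_2}(x-a)^2\, dP(x).
\]

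Next I would transport each piece to $P$ itself via the similarity maps. Using Lemma~\ref{lemma1} with $k=1$ (or directly the change of variables $x = S_i(y)$ together with $P(J_i) = \frac12$), the first term equals $\frac12 \int \min_{a \in \ga_1}(S_1(y)-a)^2\, dP(y) = \frac12 r^2 \int \min_{b \in S_1^{-1}(\ga_1)}(y-b)^2\, dP(y) = \frac12 r^2\, V(P, S_1^{-1}(\ga_1))$, since $S_1$ is a similarity with ratio $r$ and $\|S_1(y)-a\| = r\|y - S_1^{-1}(a)\|$. The same computation gives $\frac12 r^2\, V(P, S_2^{-1}(\ga_2))$ for the second term. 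Writing $j := \te{card}(\ga_1)$, so that $\te{card}(\ga_2) = n-j$, we obtain
\[
V_n = V(P,\ga_n) = \tfrac12 r^2\big(V(P, S_1^{-1}(\ga_1)) + V(P, S_2^{-1}(\ga_2))\big) \geq \tfrac12 r^2 (V_j + V_{n-j}).
\]

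To get the reverse inequality and the optimality conclusions, I would argue by contradiction: if $S_1^{-1}(\ga_1)$ were not optimal for $j$-means, pick an optimal set $\gd_1$ of $j$-means; then $S_1(\gd_1) \ci J_1$ and $S_2^{-1}(\ga_2)$ replaced analogously, and one checks that the set $S_1(\gd_1) \uu \ga_2$ is a competitor of cardinality $n$ whose distortion, computed by the same splitting as above (the Voronoi decoupling still holds because $S_1(\gd_1) \ci J_1$ and $\ga_2 \ci J_2$ are separated by the gap $(r,1-r)$ on which $P$ puts no mass and whose midpoint $\frac12$ lies strictly between them), is strictly less than $V_n$ — contradiction. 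The symmetric argument handles $S_2^{-1}(\ga_2)$. Combining, $V(P,S_1^{-1}(\ga_1)) = V_j$ and $V(P, S_2^{-1}(\ga_2)) = V_{n-j}$, which upgrades the displayed inequality to the equality $V_n = \frac12 r^2(V_j + V_{n-j})$. The one point requiring a little care — the main obstacle — is verifying that the Voronoi decoupling is preserved when we substitute the competitor sets; this is where one uses that the spatial gap $(r, 1-r)$ carries no $P$-mass and that $J_1, J_2$ sit symmetrically about $\frac12$, so that no point of $J_2$ can be closer to a point of $S_1(\gd_1) \ci [0,r]$ than to the rightmost point of $\ga_2$, and vice versa. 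Since the proof is stated to follow that of \cite[Lemma~4.5]{GL3}, I would largely cite that argument for this verification.
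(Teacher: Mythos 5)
Your argument is correct and is essentially the standard decomposition-and-exchange argument of \cite[Lemma~4.5]{GL3}, which is exactly what the paper itself invokes (it gives no written proof, deferring to that lemma and noting only that the extra Voronoi hypothesis is what makes the decoupling $V_n=\int_{J_1}\min_{a\in\ga_1}(x-a)^2\,dP+\int_{J_2}\min_{a\in\ga_2}(x-a)^2\,dP$ legitimate for general $r$). One small simplification: the ``preservation of decoupling'' you flag as the main obstacle for the competitor set is not actually needed, since the minimum over a union is bounded above by the minimum over each part, so $V(P,S_1(\gd_1)\uu\ga_2)\le \int_{J_1}\min_{a\in S_1(\gd_1)}(x-a)^2\,dP+\int_{J_2}\min_{a\in\ga_2}(x-a)^2\,dP=\frac 12 r^2 V_j+\frac 12 r^2 V(P,S_2^{-1}(\ga_2))$ holds automatically and already yields the contradiction.
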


\begin{remark}
Lemma~4.5 in \cite{GL3} does not work for all $0<r<\frac 12$. Due to that we have added an extra condition to Lemma~4.5 in \cite{GL3} to work for all $0<r<\frac 12$.
\end{remark}

\begin{lemma} \label{lemma15}  Let $\ga_4$ be an optimal set of four-means. Then, $\ga_4:=\set{a(11), a(12), a(21), a(22)}$, and the quantization error is $V_4=0.00352544$.
\end{lemma}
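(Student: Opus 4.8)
The plan is to bring $\ga_4$ into the setting of Lemma~\ref{lemma14} with $n=4$. First record the upper bound $V_4\le 0.00352544$: by Corollary~\ref{cor1} the set $\set{a(\gs):\gs\in\set{1,2}^2}=\set{a(11),a(12),a(21),a(22)}$ has distortion $\sum_{\gs\in\set{1,2}^2}\int_{J_\gs}(x-a(\gs))^2\,dP=4\cdot\tfrac14 r^{4}V=r^{4}V=0.00352544$, exactly the number appearing in the proof of Lemma~\ref{lemma13}. By Lemma~\ref{lemma13}, $\ga_4\ii[0,r)\neq\es$ and $\ga_4\ii(1-r,1]\neq\es$, so $\ga_4\ii J_1\neq\es$ and $\ga_4\ii J_2\neq\es$.

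Assuming the two remaining hypotheses of Lemma~\ref{lemma14} — namely (a) $\ga_4\ii(r,1-r)=\es$, and (b) no Voronoi region of a point of $\ga_1:=\ga_4\ii J_1$ contains a point of $J_2$, and symmetrically with $J_1$ and $J_2$ interchanged — Lemma~\ref{lemma14} gives $V_4=\tfrac12 r^{2}(V_j+V_{4-j})$ with $j:=\te{card}(\ga_1)\in\set{1,2,3}$. For $j=1$ (and symmetrically $j=3$) this equals $\tfrac12 r^{2}(V+V_3)>0.0103$, which contradicts $V_4\le 0.00352544$; hence $j=2$ and, by Lemma~\ref{lemma11}, $V_4=r^{2}V_2=r^{4}V=0.00352544$. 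Moreover Lemma~\ref{lemma14} says $S_1^{-1}(\ga_1)$ is an optimal set of two-means, so $S_1^{-1}(\ga_1)=\set{a(1),a(2)}$ by Lemma~\ref{lemma11}; since $a(\gs)=S_\gs(\tfrac12)$ (Note~\ref{note1}) this gives $\ga_1=\set{S_{11}(\tfrac12),S_{12}(\tfrac12)}=\set{a(11),a(12)}$, and likewise $\ga_2=\set{a(21),a(22)}$, which is the assertion of the lemma.

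So everything comes down to (a) and (b), which I would prove by a lower-bound estimate. If some $a_k\in\ga_4$ lay in $(r,1-r)$, the centroid condition $a_k=E(X|X\in M(a_k|\ga_4))$ of Proposition~\ref{prop0}(iii), together with $\te{supp}(P)\ci J_1\uu J_2$, forces $M(a_k|\ga_4)\ii C$ to have positive mass $m_1$ in $J_1$ and $m_2$ in $J_2$ (if it met only one side, $a_k$ would be pushed into $[0,r]$ or $[1-r,1]$); in particular there is at most one such $a_k$. One then splits into cases: when $m_1,m_2$ are both bounded away from $0$ the between-group bound $\int_{M(a_k|\ga_4)}(x-a_k)^2\,dP\ge\frac{m_1m_2}{m_1+m_2}(1-2r)^2$ already exceeds $0.00352544$; when, say, $m_1$ is small, the at most two remaining means lying in $J_1$ must carry almost all of $C\ii J_1$, so by Corollary~\ref{cor1} and Lemma~\ref{lemma11} that contribution is at least (essentially) $\tfrac12 r^{2}V\approx 0.0093$ in the worst sub-case, a contradiction, with the centroid equation keeping the in-between sub-cases in hand because a small $m_1$ forces the $J_1$-part of $M(a_k|\ga_4)$ to sit near $r$. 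The proof of (b) is analogous: a mean $a\in\ga_1\ci[0,r]$ whose (interval) Voronoi region contains a point of $C\ii J_2$ would have to contain the whole interval $[r,y]$ for that point $y$, and then either this region carries enough $J_2$-mass for the between-group bound to apply, or, if $y$ is very close to $1-r$, the mass of $C\ii J_1$ near $r$ that the same region must also swallow pulls $a$ to the right and again produces a contribution above $0.00352544$.

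The main obstacle is exactly this case analysis. The difficulty is that a gap-straddling or $J_1$–$J_2$-crossing Voronoi region may contain only a thin Cantor sliver on one side, so the naive \emph{whole basic interval} lower bound from Corollary~\ref{cor1} decays geometrically as one descends and does not on its own clear $0.00352544$; one has to combine that sliver with the necessarily large mass the same region carries on the other side (or use the centroid equation to pin down the mean), and since $r\approx 0.435$ is close to the CVT threshold $\tfrac{5-\sqrt{17}}{2}$ the numerical margins are thin, so the estimates must be carried out carefully. Once (a) and (b) are established, the conclusion is the routine bookkeeping with Lemmas~\ref{lemma11} and \ref{lemma14} and Note~\ref{note1} described above.
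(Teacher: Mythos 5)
Your skeleton is sound and matches the paper's endgame: the upper bound $V_4\le r^4V=0.00352544$, the use of Lemma~\ref{lemma13}, the reduction via Lemma~\ref{lemma14} to $V_4=\frac12 r^2(V_j+V_{4-j})$, the elimination of $j=1,3$ (your bound $\frac12 r^2(V+V_3)\approx 0.0104$ is correct), and the identification of $\ga_4$ via Lemma~\ref{lemma11} and Note~\ref{note1}. But the entire substance of the lemma lies in your steps (a) and (b) --- no mean in the gap $(r,1-r)$, and no Voronoi region crossing from $J_1$ to $J_2$ --- and these you only sketch. You cannot appeal to Proposition~\ref{prop3} here, since the paper proves that proposition for $n=4$ \emph{by citing this lemma}; you correctly avoid that circularity, but the replacement argument you outline does not close.

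Concretely, the between-group bound $\frac{m_1m_2}{m_1+m_2}(1-2r)^2$ is too weak at $r=0.4350411707$: since $(1-2r)^2\approx 0.01688$ and $\frac{m_1m_2}{m_1+m_2}\le\frac14$, its maximum possible value is about $0.00422$, barely above the threshold $0.00352544$, and it falls below the threshold as soon as $\frac{m_1m_2}{m_1+m_2}<0.209$. For instance $(m_1,m_2)=(0.1,0.4)$ gives only about $0.00135$; yet $m_1=0.1$ is not small enough for your complementary regime to apply, since the two remaining means in $J_1$ then cover only mass $0.4$ of $J_1$ and contribute on the order of $\frac12 r^2V_2\approx 0.00176$, also below threshold. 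Summing partial contributions across regions to clear $0.00352544$ in such intermediate regimes requires exactly the location-specific cylinder-set estimates the paper carries out (e.g.\ $\int_{J_{11}\cup J_{121}}(x-a(11,121))^2\,dP=0.00404695$ when the gap point lies in $[\frac12,1-r)$, and the combination $\int_{J_{11}\cup J_{1211}}+\int_{J_{122}}+\frac12 r^2V_3=0.00366173$ when it lies in $(r,\frac12]$, together with a separate two-case computation for the $1$--$3$ split using $a(11,1211,12121)$). You acknowledge that ``the main obstacle is exactly this case analysis,'' and indeed without executing it the proof is not complete; as written, the proposal establishes the conclusion only conditionally on (a) and (b).
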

\begin{proof}
Consider the four-point set $\gb$ given by
$\gb:=\set{a(\gs) : \gs\in\set{1, 2}^2}$.
Then,
\[\int\min_{a \in \gb} (x-a)^2 dP=\sum_{\gs \in \set{1, 2}^2} \mathop{\int}\limits_{J_{\gs}}(x-a(\gs))^2 dP=0.00352544.\]
Since $V_4$ is the $n$th quantization error for $n=4$, we have $V_4\leq 0.00352544$. Let $\ga_4:=\set{a_1, a_2, a_3, a_4}$, where $0< a_1<a_2<a_3<a_4< 1$, be an optimal set of four-means. If $a_1>0.20>0.189261=S_{11}(1)$, using Corollary~\ref{cor1}, we have
\[V_4\geq \int_{J_{11}}(x-a_1)^2 dP\geq \int_{J_{11}}(x-0.20)^2 dP=0.00365705>V_4,\]
which is a contradiction. So, we can assume that $a_1\leq 0.20$. Similarly, $a_4\geq 0.80$. We now show that $\ga_4$ does not contain any point from $(r, 1-r)$. Suppose that $\ga_4$ contains a point from $(r, 1-r)$. Then, due to Lemma~\ref{lemma13}, without any loss of generality, we can assume that $a_2\in (r, 1-r)$, and $1-r\leq a_3< a_4$. Two cases can arise:

Case 1: $a_2 \in [\frac 12, 1-r)$.

Then, $a_1\leq 0.20<S_{121}(0)<S_{121}(1)=0.328117<\frac 12(0.20 +\frac{1}{2})=0.35<0.352705=S_{122}(0)$. Notice that $a(11, 121)=0.158736<0.20$, and thus,
\[V_4\geq \int_{J_{11}\uu J_{121}}(x-a(11, 121))^2 dP=0.00404695>V_4,\]
which is a contradiction.

Case 2: $a_2\in (r, \frac 12]$.

Then, $S_{1211} (1)=0.2816<0.281742=\frac 12(a(11, 1211)+r)<0.292297=S_{1212}(0)$ implying the fact that $J_{11}\uu J_{1211}\sci M(a(11, 1211)|\ga_4)$ and $J_{122}\sci M(r|\ga_4)$. Again,
\begin{align} \label{eq89}
&\int_{J_2}\min_{a\in \set{a_2, a_3, a_4}}(x-a)^2 dP\geq \int_{J_2}\min_{a\in S_2(\ga_3)}(x-a)^2 dP=\frac 12  \int_{J_2}\min_{a\in S_2(\ga_3)}(x-a)^2 d(P\circ S_2^{-1})\\
&=\frac 12  \int \min_{a\in S_2(\ga_3)}(S_2(x)-a)^2 dP=\frac 12  \int \min_{a\in \ga_3}(S_2(x)-S_2(a))^2 dP=\frac 12 r^2 V_3,\notag
\end{align}
where $\ga_3$ is an optimal set of three-means as given by Lemma~\ref{lemma12}.
Thus, we obtain
\begin{align*}  V_4 \geq \int_{J_{11}\uu J_{1211}}(x-a(11, 1211))^2 dP+\int_{J_{122}}(x-r)^2 dP+\frac 12 r^2 V_3=0.00366173>V_4
\end{align*}
which gives a contradiction.

Hence, we can assume that $\ga_4$ does not contain any point from the open interval $(r, 1-r)$. We now show that $\te{card}(\ga_4\ii J_1)=\te{card}(\ga_4\ii J_2)=2$. For the sake of contradiction, assume that $a_1\in J_1$ and $\set{a_2, a_3, a_4}\sci J_2$. If the Voronoi region of $a_2$ does not contain any point from $J_1$, then
\[V_4\geq \int_{J_1}(x-a(1))^2 dP=0.00931372>V_4,\]
which leads to a contradiction. So, we can assume that the Voronoi region of $a_2$ contains points from $J_1$. Then, $\frac 12(a_1+a_2)<r$ implying $a_1<2r-a_2\leq 2r-(1-r)=3r-1=0.305124<0.312534=S_{12122}(0)$. Notice that $a(11, 1211, 12121)=0.144047<0.305124$, and $S_{12121}(1)=0.30788<0.354503=\frac 12(a(11, 1211, 12121)+(1-r))$. Then, using \eqref{eq89}, we have
\begin{align*}
V_4&\geq \int_{J_{11}\uu J_{1211}\uu J_{12121}}(x-a(11, 1211, 12121))^2 dP+\int_{J_{12122}\uu J_{122}}(x-0.305124)^2 dP+\frac 12 r^2 V_3\\
&=0.00528016>V_4,
\end{align*}
which is a contradiction. Thus, $\te{card}(\ga_4\ii J_1)=1$ and $\te{card}(\ga_4\ii J_2)=3$ give a contradiction. Since ($\te{card}(\ga_4\ii J_1)=3$ and $\te{card}(\ga_4\ii J_2)=1$) is a reflection of the case ($\te{card}(\ga_4\ii J_1)=1$ and $\te{card}(\ga_4\ii J_2)=3$) about the point $\frac 12$, we can say that $\te{card}(\ga_4\ii J_1)=3$ and $\te{card}(\ga_4\ii J_2)=1$ also yield a contradiction. Again, we have seen that $\ga_4\ii J_1\neq \es$ and $\ga_4\uu J_2\neq \es$. Thus, we have $\te{card}(\ga_4\ii J_1)=\te{card}(\ga_4\ii J_2)=2$. Since $P$ has symmetry about the point $\frac 12$, i.e., if two intervals of equal lengths are equidistant from the point $\frac 12$ then they have the same probability, and $\te{card}(\ga_4\ii J_1)=\te{card}(\ga_4\ii J_2)=2$, we can assume that the boundary of the voronoi regions of $a_2$ and $a_3$ passes through the point $\frac 12$, i.e., the Voronoi region of any point in $\ga_4\ii J_1$ does not contain any point from $J_2$, and the Voronoi region of any point in $\ga_4 \ii J_2$ does not contain any point from $J_1$. Hence,
 By Lemma~\ref{lemma14}, both $S_1^{-1}(\ga_4\ii J_1)$ and $S_2^{-1}(\ga_4\ii J_2)$ are optimal sets of two-means, i.e., $S_1^{-1}(\ga_4 \ii J_1)=S_2^{-1}(\ga_4 \ii J_2)=\set{a(1), a(2)}$ yielding $\ga_4 \ii J_1=\set{a(11), a(12)}$ and $\ga_4\ii J_2=\set{a(21), a(22)}$. Thus, we have $\ga_4=\set{a(11), a(12), a(21), a(22)}$, and the corresponding quantization error is
\[V_4=\frac 12 r^2(V_2+V_2)=r^2V_2=0.00352544,\]
which is the lemma.
\end{proof}

\begin{prop} \label{prop2}
Let $n\geq 2$, and $\ga_n$ be an optimal set of $n$-means. Then, $\ga_n$ does not contain any point from the open interval $(r, 1-r)$, i.e., $\ga_n\ii (r, 1-r)=\es$.
\end{prop}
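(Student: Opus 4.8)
The plan is to argue by strong induction on $n$, the base cases $n=2,3,4$ being already settled by Lemma~\ref{lemma11}, Lemma~\ref{lemma12}, and Lemma~\ref{lemma15} (in each of those the optimal set is explicitly $\{a(\gs)\}$ with all words starting in $1$ or $2$, so no point lies in $(r,1-r)$). So fix $n\geq 5$ and assume the statement holds for all smaller values; let $\ga_n=\{a_1<a_2<\cdots<a_n\}$ be an optimal set of $n$-means. First I would record the quantitative fact that drives everything: since $V_n\le V_{2^{\ell(n)}}=r^{2\ell(n)}V$ and, more usefully, $V_n$ is dominated by the distortion of the natural $2^{\ell(n)}$-point or $n$-point configurations from Definition~\ref{def1}, the error $V_n$ is extremely small once $n\geq 5$; in particular $V_n\le V_5\le V(P,\gb_5(I))$, a number I can compute explicitly from the formula in Definition~\ref{def1}. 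By Lemma~\ref{lemma13} we already know $\ga_n\ii[0,r)\ne\es$ and $\ga_n\ii(1-r,1]\ne\es$, so $a_1<r$ and $a_n>1-r$.

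Next I would suppose, for contradiction, that some $a_i\in(r,1-r)$, and choose the extreme such indices: let $a_p$ be the smallest element of $\ga_n$ in $[r,1-r]$ that is actually $>r$... more cleanly, let $a_p$ be the leftmost point of $\ga_n$ with $a_p\in(r,1-r)$ and $a_q$ the rightmost such point (possibly $p=q$). By Lemma~\ref{lemma13} there is a point of $\ga_n$ in $J_1$ to the left of $a_p$ and a point in $J_2$ to the right of $a_q$. Since the gap $(r,1-r)$ in the support of $P$ has length $1-2r>0$, any point sitting strictly inside it is ``wasted'': I would bound below the contribution to $V_n$ of the mass $P(J_1)=P(J_2)=\tfrac12$ near the two endpoints $r$ and $1-r$ that must be served either by $a_p,a_q$ themselves (which are then far from the relevant basic intervals) or by neighbouring points whose Voronoi regions are forced to straddle the gap. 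Concretely, mimicking the Case~1/Case~2 dichotomy in the proof of Lemma~\ref{lemma15}: either the Voronoi region of $a_p$ (resp.\ $a_q$) contains no mass from $J_1$ (resp.\ $J_2$) near the relevant end, in which case a single interval $J_1$ or a union like $J_{11}\uu J_{121}$ is served by a point at distance $\ge$ some explicit positive constant, forcing $V_n\ge c_1$; or the Voronoi region of $a_p$ does reach into $J_1$, which pins $a_{p-1}$ (the last point in $J_1$) to satisfy $\tfrac12(a_{p-1}+a_p)<$ something near $r$, hence $a_{p-1}$ is pushed left of a computable threshold, and then a union of basic intervals of $J_1$ near its right end (e.g.\ $J_{1211}$, $J_{12121}$, $\ldots$, as in Lemma~\ref{lemma15}) is served badly, again forcing $V_n\ge c_2$. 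Using the symmetric estimate on the $J_2$ side and adding, together with a $\tfrac12 r^2 V_{k}$ lower bound for the remaining mass (as in \eqref{eq89}), I get $V_n\ge c_1+c_2+\tfrac12 r^2V_{\text{something}}>V_5\ge V_n$ at $r=0.4350411707$, the desired contradiction.

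Therefore $\ga_n\ii(r,1-r)=\es$, which combined with Lemma~\ref{lemma13} means $\ga_n$ splits as $\ga_1\uu\ga_2$ with $\ga_i=\ga_n\ii J_i$ both nonempty, and by the symmetry of $P$ about $\tfrac12$ one may assume the Voronoi boundary between the two blocks passes through $\tfrac12$, so Lemma~\ref{lemma14} applies and $V_n=\tfrac12 r^2(V_j+V_{n-j})$ with $j=\mathrm{card}(\ga_1)$, $1\le j\le n-1$; then $S_1^{-1}(\ga_1)$ and $S_2^{-1}(\ga_2)$ are optimal sets of $j$ and $n-j$ means, and the induction hypothesis says neither contains a point of $(r,1-r)$, which is consistent and lets the induction close. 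The step I expect to be the real obstacle is the second paragraph: getting clean, uniformly-valid-over-the-relevant-$r$ numerical lower bounds $c_1,c_2$ for the ``wasted mass'' in every sub-case of where $a_p$ (and its neighbour) can sit, since unlike the $n=4$ case in Lemma~\ref{lemma15} one must handle arbitrarily deep basic intervals of $J_1$ — the saving grace is that $V_n\le V_5$ is so small that even the crudest such bound beats it, but organizing the case analysis so that it visibly covers all positions of the offending point is the delicate part.
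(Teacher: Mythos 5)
Your mechanism for the contradiction is the same one the paper uses: if some point of $\ga_n$ lies in $(r,1-r)$, the centroid condition from Proposition~\ref{prop0} forces its Voronoi region to reach into $J_1$ or $J_2$, which pushes the adjacent point of $\ga_n\ii J_1$ to the left of an explicit threshold (such as $3r-1$ or $2r-\tfrac12$), so that fixed basic intervals near the right end of $J_1$ (e.g.\ $J_{12212}\uu J_{1222}$) are served from a distance bounded below, and the resulting distortion exceeds an a priori upper bound on $V_n$. One structural remark before the main objection: the induction wrapper does no work. A point of $\ga_n$ lying in $(r,1-r)$ belongs to neither $\ga_n\ii J_1$ nor $\ga_n\ii J_2$, so the inductive hypothesis about $S_i^{-1}(\ga_n\ii J_i)$ cannot exclude it; the entire burden falls on your second paragraph, and indeed the paper proves the statement directly for each $n$ with no induction.

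The genuine gap is in the quantitative closing step, which you yourself flag as the obstacle. You propose to beat the uniform benchmark $V_5$ for all $n\ge 5$, asserting that ``$V_n\le V_5$ is so small that even the crudest such bound beats it.'' Numerically this is backwards at $r=0.4350411707$: one has $V_5\le 0.00281$, whereas the wasted-mass integral over fixed-depth intervals such as $J_{12212}\uu J_{1222}$ is only about $0.00015$, and your auxiliary term $\tfrac12 r^2V_k$ tends to $0$ as $n\to\infty$; so for large $n$ your lower bound is a constant of order $3\times10^{-4}$ that never reaches $V_5$. The paper closes the argument by letting the benchmark shrink with $n$: for $n\ge 16$ it uses $V_n\le V_{16}\le r^8V\approx 1.26\times10^{-4}$, against which the constant $\approx 1.5\times10^{-4}$ wins in a single stroke uniformly in $n$; and for $5\le n\le 15$, where $V_n$ is too large for the crude bound, it runs the finer, configuration-dependent case analysis you sketch (splitting on $a_{j+1}\in[\tfrac12,1-r)$ versus $a_{j+1}\in(r,\tfrac12]$ and adding contributions like $\int_{J_{11}\uu J_{1211}}(x-a(11,1211))^2\,dP+\int_{J_{122}}(x-r)^2\,dP+\tfrac12 r^6V$), carried out explicitly for $n=5$ and asserted to be similar for $6\le n\le 15$. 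Contrary to your stated worry, no arbitrarily deep basic intervals are needed --- the depth of the intervals stays fixed; what must vary with $n$ is the upper bound you compare against.
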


\begin{proof} By Lemma~\ref{lemma11}, Lemma~\ref{lemma12}, and Lemma~\ref{lemma15}, the proposition is true for $n=2, 3, 4$. We now prove that the proposition is true for $n=5$. Let $\ga_5:=\set{a_1, a_2, a_3, a_4, a_5}$ be an optimal set of five-means, such that $0<a_1<a_2<a_3<a_4<a_5<1$.
Consider the set of five points $\gb$ given by $\gb:=\set{a(11), (12), a(21), a(221), a(222)}$. The distortion error due to the set $\gb$ is given by
\begin{align*}
& \int\min_{b\in \gb} (x-b)^2 dP=3 \int_{J_{11}}(x-a(11))^2 dP+2\int_{J_{221}}(x-a(221))^2 dP=0.00281089.
\end{align*}
Since $V_5$ is the quantization error for five-means, we have $V_5\leq 0.00281089$.
If $0.189261=S_{11}(1)<a_1$, then
\[V_5\geq \int_{J_{11}}(x-S_{11}(1))^2 dP=0.00312009>V_5,\]
which gives a contradiction. Hence, we can assume that $a_1<S_{11}(1)=0.189261$. Similarly, $S_{22}(0)<a_5$.
For the sake of contradiction, assume that $\ga_5$ contains a point from $(r, 1-r)$. Notice that due to Proposition~\ref{prop0}, if $\ga_5$ contains a point from $(r, 1-r)$, then it cannot contain more than one point from $(r, 1-r)$. Suppose that $a_2 \in (r, 1-r)$. Two cases can arise:

Case~1: $a_2 \in [\frac 12, 1-r)$.

Then,
$a_1\leq 0.189261=S_{11}(1)<S_{121}(0)<S_{121}(1)=0.328117<0.344631=\frac 12(0.189261 +\frac{1}{2})<0.352705=S_{122}(0)$, and so,
\begin{align*}
V_5&\geq \int_{J_{11}}(x-a(11))^2 dP+\int_{J_{121}}(x-S_{11}(1))^2 dP+\int_{J_{122}}(x-\frac 12)^2 dP=0.00364889>V_5,
\end{align*}
which is a contradiction.

Case~2: $a_2\in (r, \frac 12]$.

Then, $S_{1211} (1)=0.2816<0.281742=\frac 12(a(11, 1211)+r)<0.292297=S_{1212}(0)$ implying the fact that $J_{11}\uu J_{1211}\sci M(a(11, 1211)|\ga_5)$ and $J_{122}\sci M(r|\ga_5)$. Again, we have
\begin{align*} \label{eq90}
&\int_{J_2}\min_{a\in \set{a_2, a_3, a_4, a_5}}(x-a)^2 dP\geq \int_{J_2}\min_{a\in S_2(\ga_4)}(x-a)^2 dP=\frac 12  \int_{J_2}\min_{a\in S_2(\ga_4)}(x-a)^2 d(P\circ S_2^{-1})\\
&=\frac 12  \int \min_{a\in S_2(\ga_4)}(S_2(x)-a)^2 dP=\frac 12  \int \min_{a\in \ga_4}(S_2(x)-S_2(a))^2 dP=\frac 12 r^6 V,
\end{align*}
where $\ga_4$ is an optimal set of four-means.
Thus, we obtain
\begin{align*}  V_5 \geq \int_{J_{11}\uu J_{1211}}(x-a(11, 1211))^2 dP+\int_{J_{122}}(x-r)^2 dP+\frac 12 r^6 V=0.00294718>V_5
\end{align*}
which gives a contradiction.

Hence, we can assume that $a_2 \not \in (r, 1-r)$. Likewise, if $a_3 \in (r, 1-r)$, we can show that a contradiction arises. Proceeding in the similar fashion, one can show that the proposition is true for all $6\leq n\leq 15$. We now give the general proof of the proposition for all $n\geq 16$. Let $\ga_n$ be an optimal set of $n$-means for all $n\geq 16$, and $V_n$ is the corresponding quantization error.
Consider the set of sixteen points $\gb$ given by $\gb:=\set{a(\gs) : \gs \in I^4}$. The distortion error due to the set $\gb$ is given by
\begin{align*}
& \int\min_{b\in \gb} (x-b)^2 dP=r^8V=0.00012628.
\end{align*}
Since $V_n$ is the quantization error for $n$-means for $n\geq 16$, we have
$V_{n}\leq V_{16}\leq 0.00012628$.
Write $\ga_n:=\set{a_1, a_2, \cdots, a_n}$, where $0<a_1<a_2<\cdots <a_n<1$. By Lemma~\ref{lemma13}, we see that $\ga_n\ii J_1\neq \es$ and $\ga_n\ii J_2\neq \es$. Let $j$ be the largest positive integer such that $a_j\in J_1$. Then, $a_{j+1}>r$. We need to show that $\ga_n\ii (r, 1-r)=\es$. For the sake of contradiction, assume that $\ga_n\ii (r, 1-r)\neq \es$. Proposition~\ref{prop0} implies that if $\ga_n$ contains a point from the open interval $(r, 1-r)$, then it can not contain more than one point from the open interval $(r, 1-r)$. Thus, we have $a_j\leq r<a_{j+1}<1-r\leq a_{j+2}$.
The following two cases can arise:

Case 1: $a_{j+1}\in [\frac 12, 1-r)$.

Then, by Proposition~\ref{prop0}, we have $\frac 12(a_j+a_{j+1})<r$ implying $a_j<2r-a_{j+1}\leq 2r-\frac 12=0.370082<S_{12 2 1 2}(0)=0.372942$. Thus,
\[V_n\geq \int_{J_{12 2 1 2}\uu J_{1222}}(x- 0.3700816)^2 dP=0.000150535>V_{16}\geq V_n,\]
which is a contradiction.

Case 2: $a_{j+1} \in (r, \frac 12]$.

Since this case is the reflection of Case~1 with respect to the point $\frac 12$, a contradiction arises.

Hence, $\ga_n$ does not contain any point from the open interval $(r, 1-r)$. Thus, the proof of the proposition is complete.
\end{proof}

\begin{prop}\label{prop3}

Let $\ga_n$ be an optimal set of $n$-means with $n\geq 2$. Then, the Voronoi region of any point in $\ga_n\ii J_1$ does not contain any point from $J_2$, and the Voronoi region of any point in $\ga_n\ii J_2$ does not contain any point from $J_1$.

\end{prop}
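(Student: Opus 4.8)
The plan is to argue by contradiction, assuming that some point $a\in\ga_n\ii J_1$ has Voronoi region $M(a|\ga_n)$ meeting $J_2$ (the symmetric case of a point in $\ga_n\ii J_2$ whose region meets $J_1$ follows by the reflection symmetry of $P$ about $\frac 12$). Write $\ga_n=\set{a_1<a_2<\cdots<a_n}$, and let $j$ be the largest index with $a_j\in J_1$; by Lemma~\ref{lemma13} and Proposition~\ref{prop2} we already know $1\le j\le n-1$, that $a_j\le r<1-r\le a_{j+1}$, and that no point of $\ga_n$ lies in the open gap $(r,1-r)$. The hypothesis that some Voronoi region crosses the gap means precisely that the dividing hyperplane $\frac12(a_j+a_{j+1})$ lies strictly on one side of the gap $[r,1-r]$; say $\frac12(a_j+a_{j+1})<r$, so that the point $a_{j+1}\in[1-r,1]$ ``captures'' the whole interval $J_{12\cdots}$ near the right end of $J_1$ (the mirror case $\frac12(a_j+a_{j+1})>1-r$ being symmetric). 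In particular $a_j$ is quite small: $a_j<2r-a_{j+1}\le 2r-(1-r)=3r-1$.

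The main step is then to derive a quantitative contradiction with the upper bound on $V_n$, exactly in the style of the proofs of Lemma~\ref{lemma15} and Proposition~\ref{prop2}. First, handle the small cases $n=2,3,4$ directly from Lemmas~\ref{lemma11},~\ref{lemma12},~\ref{lemma15}, where the optimal sets are known explicitly and manifestly have the stated separation property. For $n\ge 5$ one compares $V_n$ against the distortion of a good competitor: using $\gb_n(I)$ (or, for the crude bound, the sets $\set{a(\gs):\gs\in I^{\ell}}$ as in the earlier proofs) one gets $V_n\le V_{2^{\ell(n)}}\le r^{2\ell(n)}V$, or the sharper $V_n\le V_4=r^2V_2$, etc. On the other hand, since $a_j<3r-1$ and $a_{j+1}$ must also cover part of $J_2$, the points of $J_1$ near its right endpoint — concretely the basic intervals $J_{122}$, $J_{1212}$, and their refinements — are all served by $a_j$ from a distance bounded below by a fixed positive quantity when $r=0.4350411707$. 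Integrating $(x-a_j)^2$ over $J_{1212}\uu J_{122}$ (or a suitable union of basic intervals, with $a_j$ replaced by the nearest admissible value $2r-a_{j+1}\le 3r-1$) and invoking Corollary~\ref{cor1} gives a lower bound for $V_n$ strictly exceeding the competitor bound; this is the inequality that closes the argument, and a single such estimate suffices for all large $n$ because the left side is bounded below independently of $n$ while $V_{2^{\ell(n)}}\to 0$. A finite range of intermediate $n$ (between the small explicit cases and where the asymptotic bound bites) is checked by the same one-line numerical estimates used in Proposition~\ref{prop2}.

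I expect the main obstacle to be purely bookkeeping: choosing, for each relevant position of $j$ and each range of $n$, the right union of basic intervals of $J_1$ over which $a_j$ is forced to be the nearest point, so that the resulting integral — evaluated at $r=0.4350411707$ via Corollary~\ref{cor1} — beats the competitor's distortion with room to spare. The geometric input that makes this possible is that $3r-1\approx 0.305$ sits well to the left of $S_{1212}(0)$ and $S_{122}(0)$, so a whole cluster of right-end basic intervals of $J_1$ is ``orphaned'' to $a_j$; converting this picture into the correct explicit decimal inequalities, and verifying that one uniform estimate plus finitely many special cases covers every $n\ge 2$, is where the care is needed. There is no conceptual difficulty beyond what already appears in Lemma~\ref{lemma15} and Proposition~\ref{prop2}.
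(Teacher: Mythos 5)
Your proposal follows essentially the same route as the paper: assume a Voronoi region crosses the gap, deduce $a_j<2r-a_{j+1}\le 3r-1=0.305124$, bound $V_n$ from below by integrating the squared distance over right-end basic intervals of $J_1$ that are then uniformly far from every point of $\ga_n$, and compare with a fixed competitor's distortion (the paper uses $V_n\le V_8\le 0.000667229$ for all $n\ge 8$, treating $n=2,3,4$ via the explicit optimal sets and $5\le n\le 7$ separately). One numerical correction to your geometric picture: $S_{1212}(0)=0.292297<3r-1$, so $J_{1212}$ is \emph{not} entirely to the right of $3r-1$ and $(x-(3r-1))^2$ is not a valid pointwise lower bound there; the paper instead integrates over $J_{12122}\uu J_{122}$, where $S_{12122}(0)=0.312534>3r-1$, obtaining $0.00107592>V_8$ — exactly the ``suitable union of basic intervals'' your hedge anticipates.
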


\begin{proof}  Notice that $\frac 12(a(1)+a(2))=\frac 12$, $r<\frac 12 (a(1)+a(21))<1-r$, $r<\frac 12 (a(12)+a(2))<1-r$, and $r<\frac 12=\frac 12 (a(12)+a(21))<1-r$. Thus, by Lemma~\ref{lemma11}, Lemma~\ref{lemma12}, and Lemma~\ref{lemma15}, the proposition is true for $n=2, 3, 4$. It can also be shown that the proposition is true for $5\leq n\leq 7$. Due to lengthy as well as the technicality of the proofs we don't show them in the paper, and give a general proof of the proposition for all $n\geq 8$.
Let us consider a set of eight points $\gb$ given by $\gb:=\set{a(\gs) : \gs \in \set{1, 2}^3}$. Then,
\[\int\min_{a \in \gb} (x-a)^2 dP=\sum_{\gs \in \set{1, 2}^3} \mathop{\int}\limits_{J_{\gs}}(x-a(\gs))^2 dP=0.000667229.\]
Since $V_n$ is the $n$th quantization error for $n\geq 8$, we have $V_n\leq V_{8}\leq 0.000667229$.
Let $\ga_n:=\set{a_1, a_2, \cdots, a_n}$ be an optimal set of $n$-means for $n\geq 8$ with $0\leq a_1<a_2<\cdots <a_n\leq 1$, and let $j$ be the greatest positive integer such that $a_j\in J_1$. Then, by Proposition~\ref{prop2}, we have $a_j< r$ and $1-r<a_{j+1}$. Suppose that the Voronoi region of $a_{j+1}$ contains points from $J_1$. Then, $\frac 12(a_j+a_{j+1})<r$ yielding $a_j< 2r-a_{j+1}\leq 2r-(1-r)=3r-1=0.305124<0.312534=S_{12122}(0)$. Hence, by Corollary~\ref{cor1},
\begin{align*}
V_n&\geq \int_{J_{12122}\uu J_{122}}(x- 0.305124)^2 dP=0.00107592>V_{8}\geq V_n,
\end{align*}
which is a contradiction. Thus, we can assume that the Voronoi region of $a_{j+1}$ does not contain any point from $J_1$. Similarly, we can show that the Voronoi region of $a_{j}$ does not contain any point from $J_2$. Hence, the proposition is true for all $n\geq 8$. Thus, we complete the proof of the proposition.
\end{proof}

We are now ready to give the proof of Theorem~\ref{Th1}.

\subsection*{Proof of Theorem~\ref{Th1}.} We prove the theorem by induction. For $n\geq 2$ let $\ga_n$ be an optimal set of $n$-means for $P$. By Lemma~\ref{lemma11}, Lemma~\ref{lemma12}, and Lemma~\ref{lemma15}, the theorem is true for $n=2, 3, 4$. Suppose that the assertion of the theorem holds for all $m<n$, where $n\geq 2$. Set $\ga_1:=\ga_n\ii J_1$ and $\ga_2:=\ga_n\ii J_2$, and $j:=\te{card}(\ga_1)$.
By Lemma~\ref{lemma13}, Lemma~\ref{lemma14},  Proposition~\ref{prop2}, and Proposition~\ref{prop3}, there exists a $j\in \set{1, 2, \cdots, n-1}$ such that
\[V_n=\frac 12 r^2(V_j+V_{n-j}),\]
which is same as the expression of $V_n$ given in \cite{GL3} for $r=\frac 13$.   Without any loss of generality, we can assume that $j\geq n-j$. Then, proceeding similarly, as given in the proof of Theorem~5.2 in \cite{GL3}, we can show that the following inequalities are true:
 \[2^{\ell(n)-1}\leq j\leq 2^{\ell(n)} \te{ and } 2^{\ell(n)-1}\leq n-j< 2^{\ell(n)}.\]
The rest of the induction hypothesis, follows exactly same as the last part of the proof of Theorem~5.2 in \cite{GL3}. Thus, the proof of Theorem~\ref{Th1} is complete. \qed

\section{Proof of the main theorem Theorem~\ref{maintheorem}}

In this section, we determine the least upper bound of $r$ for which $\gb_n(I)$ forms an optimal set of $n$-means. It is known that $\gb_n(I)$ forms a CVT if $0<r \leq\frac{5-\sqrt{17}}2$ (see \cite[Lemma~4.2]{R2}). By Proposition~\ref{prop51}, $\gg_n(I)$ forms a CVT if $0.3613249509\leq r\leq 0.4376259168<\frac{5-\sqrt{17}}2$.
Let us now prove the following proposition.

\begin{prop} \label{prop4} For $n\geq 2$, let $\gb_n(I)$ be the set defined by Definition~\ref{def1}, and $\gg_n(I)$ be the set defined by Definition~\ref{def2}. Assume that $n$ is not of the form $2^{\ell(n)}$ for any positive integer $\ell(n)$. Then, $V(\gb_n(I))>V(\gg_n(I))$ if $0.4350411707< r\leq 0.4376259168<\frac{5-\sqrt{17}}2$, and $V(\gb_n(I))=V(\gg_n(I))$ if $r=0.4350411707$, where $V(\gb_n(I)):=V(P, \gb_n(I))$ and $V(\gg_n(I)):=V(P, \gg_n(I))$, respectively, denote the distortion errors for the CVTs $\gb_n(I)$ and $\gg_n(I)$.
\end{prop}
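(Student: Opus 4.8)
The plan is to show that, for every $n$ that is not a power of $2$, the difference $V(\gb_n(I))-V(\gg_n(I))$ is a strictly positive multiple of one fixed, $n$-independent quantity $D(r)$, and then to identify the sign of $D(r)$ with the polynomial condition of Theorem~\ref{maintheorem}.

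\emph{Reduction to an $n$-independent quantity.} By Definition~\ref{def1} the error $V(\gb_n(I))=\frac{1}{2^{\ell(n)}}r^{2\ell(n)}V\bigl(2^{\ell(n)+1}-n+r^{2}(n-2^{\ell(n)})\bigr)$ depends only on $n$, and by Proposition~\ref{prop34} so does $V(\gg_n(I))$, expressed through $V_{2}=r^{2}V$ (Lemma~\ref{lemma11}), $V_{4}=r^{4}V$ (since $\gg_{4}=\{a(11),a(12),a(21),a(22)\}$, cf.\ Lemma~\ref{lemma15}) and $V_{3}=V(P,\gg_{3})$; in particular neither error depends on the choice of $I$, so $I$ may stay arbitrary. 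Put $D(r):=\frac12 r^{2}V(1+r^{2})-V_{3}$, and note that $D(r)=V(\gb_{3})-V(\gg_{3})$, since $V(\gb_{3})=\frac12 r^{2}V(1+r^{2})$ by Definition~\ref{def1} and $V(\gg_{3})=V_{3}$ by definition of $V_{3}$. Writing $\ell=\ell(n)$ and splitting along the two ranges of Definition~\ref{def2}: if $2^{\ell}<n\le 3\cdot 2^{\ell-1}$, set $k=n-2^{\ell}\in\{1,\dots,2^{\ell-1}\}$, substitute $V_{2}=r^{2}V$ and simplify; the terms carrying the factor $2^{\ell-1}$ cancel, leaving
\[V(\gb_n(I))-V(\gg_n(I))=\frac{r^{2\ell-2}}{2^{\ell-1}}\,k\,D(r).\]
If $3\cdot 2^{\ell-1}<n<2^{\ell+1}$, set $k=n-3\cdot 2^{\ell-1}\in\{1,\dots,2^{\ell-1}-1\}$, substitute $V_{4}=r^{4}V$ and simplify; here the common factor $(2^{\ell-1}-k)$ pulls out, leaving
\[V(\gb_n(I))-V(\gg_n(I))=\frac{r^{2\ell-2}}{2^{\ell-1}}\,(2^{\ell-1}-k)\,D(r).\]
In every case — including $n=3$, where the coefficient is $1$ — the coefficient of $D(r)$ is strictly positive.

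\emph{Sign of $D(r)$.} Using the symmetry of $P$ about $\frac12$, the two outer Voronoi blocks of $\gg_{3}$ contribute equally to $V_{3}$ and the middle block $J_{122}\cup J_{211}$ has centroid $\frac12$, while $a(11,121)=\frac16(r^{3}+2r)$; Corollary~\ref{cor1} then gives
\[V_{3}=\frac12 r^{4}V(1+r^{2})+\frac{1}{24}r^{2}(r-1)^{2}(r-2)^{2}+\frac{1}{16}(r-1)^{2}(r^{2}+r-1)^{2}.\]
Substituting $V=\frac{1-r}{4(1+r)}$ and collecting terms, the factor $(1+r)$ in the denominator clears and one obtains
\[D(r)=\frac{(r-1)^{2}}{48}\bigl(r^{4}+2r^{3}+r^{2}+6r-3\bigr).\]
Clearing the same $(1+r)$ in the identity $D(r)=0$ shows that $D(r)=0$ is, up to the factor $(r-1)^{2}(r+1)$ which is nonzero on $(0,\frac12)$, exactly the defining equation $(r-1)(r^{4}+r^{2})=\frac16\bigl(r^{7}+r^{6}+4r^{5}-2r^{4}-2r^{3}-8r^{2}+9r-3\bigr)$ of Theorem~\ref{maintheorem}. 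The quartic $q(r):=r^{4}+2r^{3}+r^{2}+6r-3$ has $q'(r)>0$ on $(0,\infty)$, $q(0)<0$ and $q(\frac12)>0$, hence a unique root $r_{0}\in(0,\frac12)$, numerically $r_{0}\approx 0.4350411707$, with $q(r)>0$ for $r_{0}<r<\frac12$. Therefore $D(r_{0})=0$ and $D(r)>0$ for $r_{0}<r\le 0.4376259168<\frac12$. Combining with the reduction, $V(\gb_n(I))=V(\gg_n(I))$ when $r=r_{0}$ and $V(\gb_n(I))>V(\gg_n(I))$ when $r_{0}<r\le 0.4376259168$, which is the proposition.

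The main obstacle is the bookkeeping in the reduction step: one must verify that in both ranges the various $n$-dependent contributions reorganize into the \emph{same} $n$-independent factor $D(r)$, the cancellation of the $2^{\ell-1}$-proportional terms in the first range and the extraction of the common factor $(2^{\ell-1}-k)$ in the second being the delicate points, and both relying on the specific values $V_{2}=r^{2}V$ and $V_{4}=r^{4}V$. The evaluation of $V_{3}$ and the final simplification of $D(r)$ are mechanical but must land precisely on $r^{4}+2r^{3}+r^{2}+6r-3$ in order to match the equation in Theorem~\ref{maintheorem}.
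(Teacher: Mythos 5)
Your proposal is correct and follows essentially the same route as the paper: reduce every $n$ not a power of $2$ to the comparison of $V(\gb_3)$ with $V(\gg_3)$ via Definition~\ref{def1} and Proposition~\ref{prop34} (using $V_2=r^2V$ and $V_4=r^4V$), and then settle the three-point case by explicit computation, your factorization $D(r)=\frac{(r-1)^2}{48}(r^4+2r^3+r^2+6r-3)$ being an equivalent rewriting of the paper's comparison of $-\frac{(r-1)(r^4+r^2)}{8(r+1)}$ with $-\frac{r^7+r^6+4r^5-2r^4-2r^3-8r^2+9r-3}{48(r+1)}$. Your version is in fact slightly more careful than the paper's, since you make explicit that the difference $V(\gb_n)-V(\gg_n)$ is a \emph{positive} multiple ($k$, resp.\ $2^{\ell-1}-k$) of the $n$-independent quantity $D(r)$, which is what is actually needed to transfer the strict inequality and not merely the equality condition.
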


\begin{proof} If $n$ is of the form $2^{\ell(n)}$ for some positive integer $\ell(n)$, then as $\gb_n(I)=\gg_n(I)$, we have $V(\gb_n(I))=V(\gg_n(I))$ for all $0<r<\frac 12$. Let us assume that $n$ is not of the form $2^{\ell(n)}$ for any positive integer $\ell(n)\geq 2 $. Then, the following three cases can aries:

Case~1. $n=3$.

In this case we have, $V(\gb_3(I))=-\frac{(r-1) \left(r^4+r^2\right)}{8 (r+1)}$ and $V(\gg_3(I))=-\frac{r^7+r^6+4 r^5-2 r^4-2 r^3-8 r^2+9 r-3}{48 (r+1)}$. Then, $V(\gb_3(I))=V(\gg_3(I))$ if $r=0.4350411707$, and $V(\gb_3(I))>V(\gg_3(I))$ if $0.4350411707<r<\frac 12$.

Case~2. $n\geq 4$ and $2^{\ell(n)}< n\leq 3 \cdot 2^{\ell(n)-1}$.

Then, using Definition~\ref{def2} and Proposition~\ref{prop34}, we see that $V(\gb_n(I))=V(\gg_n(I))$ if

$\frac{1}{ 2^{\ell(n)}} r^{2\ell(n)} V \Big(2^{\ell(n)+1}-n+r^2(n-2^{\ell(n)})\Big)=\frac 1{2^{\ell(n)-1}}r^{2(\ell(n)-1)} \Big(V(\gg_3(I))\, (n-2^{\ell(n)}) +V(\gg_2(I)) \,(3\cdot 2^{\ell(n)-1}-n)\Big)$, which after simplification yields that $\frac{1}{2} r^2 (r^2+1) V=V(\gg_3(I))$, i.e., $V(\gb_3(I))=V(\gg_3(I))$. Hence, by Case~1, we have $V(\gb_n(I))=V(\gg_n(I))$ if $r=0.4350411707$, and $V(\gb_n(I))>V(\gg_n(I))$ if $0.4350411707<r<\frac 12$.

Case~3. $n\geq 4$ and $ 3 \cdot 2^{\ell(n)-1}<n< 2^{\ell(n)+1}$.

Then, using Definition~\ref{def2} and Proposition~\ref{prop34}, we see that $V(\gb_n(I))=V(\gg_n(I))$ if

$\frac{1}{ 2^{\ell(n)}} r^{2\ell(n)} V \Big(2^{\ell(n)+1}-n+r^2(n-2^{\ell(n)})\Big)=\frac 1{2^{\ell(n)-1}}r^{2(\ell(n)-1)} \Big(V(\gg_3(I))  (2^{\ell(n)+1}-n) +V(\gg_4(I)) (n-3\cdot 2^{\ell(n)-1})\Big) $, which after simplification yields that $\frac{1}{2} r^2 (r^2+1) V=V(\gg_3(I))$, i.e., $V(\gb_3(I))=V(\gg_3(I))$. Hence, by Case~1, we have $V(\gb_n(I))=V(\gg_n(I))$ if $r=0.4350411707$, and $V(\gb_n(I))>V(\gg_n(I))$ if $0.4350411707<r<\frac 12$.

Recall that both $\gb_n(I)$ and $\gg_n(I)$ form CVTs if  $0.3613249509\leq r\leq 0.4376259168<\frac{5-\sqrt{17}}2$. Hence, by Case~1, Case~2, and Case~3, we see that if $n$ is not of the form $2^{\ell(n)}$ for any positive integer $\ell(n)$. Then, $V(\gb_n(I))>V(\gg_n(I))$ if $0.4350411707< r\leq 0.4376259168<\frac{5-\sqrt{17}}2$, and $V(\gb_n(I))=V(\gg_n(I))$ if $r=0.4350411707$. Thus, the proof of the proposition is complete.
\end{proof}

We now give the following definition.

\begin{defi}\label{def3} For $n\in \D N$ with $n\geq 2$ let $\ell(n)$ be the unique natural number with $2^{\ell(n)} \leq n<2^{\ell(n)+1}$. Let $\gd_n:=\gd_n(I)$ be the set defined as follows: $\gd_2:=\set{a(1), a(2)}$, and $\gd_3:=\set{a(11, 121, 1221), a(1222, 21), a(22)}$ or $\gd_3:=\set{a(11),  a(12, 2111), a(2112, 212, 22)}$. For $n\geq 4$, define $\gd_n:=\gd_n(I)$ as follows:
\[\gd_n(I)=\left\{\begin{array}{cc}
\mathop{\uu}\limits_{\go\in I} S_\go(\gd_3)\uu \mathop{\uu}\limits_{\go \in \set{1, 2}^{\ell(n)-1}\setminus I} S_\go (\gd_2) & \te{ if } 2^{\ell(n)}\leq n\leq 3\cdot 2^{\ell(n)-1},\\
\mathop{\uu}\limits_{\go\in \set{1, 2}^{\ell(n)-1}\setminus I} S_\go(\gd_3) \uu \mathop{\uu}\limits_{\go \in I} S_\go(\gd_4) & \te{ if } 3\cdot 2^{\ell(n)-1}<n< 2^{\ell(n)+1},
\end{array}
\right.\] where $I \sci \set{1, 2}^{\ell(n)-1}$ with $\te{card}(I)=n-2^{\ell(n)}$ if $2^{\ell(n)}\leq n\leq 3\cdot 2^{\ell(n)-1}$; and
$\te{card}(I)=n-3 \cdot 2^{\ell(n)-1}$ if $3\cdot 2^{\ell(n)-1}<n< 2^{\ell(n)+1}$.
\end{defi}

The following proposition is due to \cite{R2}.

\begin{prop}(see \cite[Proposition~4.3]{R2}) \label{prop5}
Let $\gd_n(I)$ be the set defined by Definition~\ref{def3}, and $\gb_n(I)$ be the set defined by Definition~\ref{def1}. Suppose that $n$ is not of the form $2^{\ell(n)}$ for any positive integer $\ell(n)$. Then
$V(P, \gd_n(I))<V(P, \gb_n(I))$ if $0.4371985206<r\leq \frac{5-\sqrt{17}}2$, where $V(P, \gd_n(I))$ and $V(P, \gb_n(I))$, respectively, denote the distortion errors for the CVTs $\gd_n(I)$ and $\gb_n(I)$.
\end{prop}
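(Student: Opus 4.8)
I would run, for $\gd_n(I)$, the same two-step argument this paper already uses for $\gg_n(I)$ (Proposition~\ref{prop34} and Proposition~\ref{prop4}): first obtain a closed formula for $V(P,\gd_n(I))$ in terms of the building blocks $V(P,\gd_2),V(P,\gd_3),V(P,\gd_4)$, and then reduce the comparison with $V(P,\gb_n(I))$ to the single scalar comparison of $V(P,\gb_3(I))$ with $V(P,\gd_3(I))$ --- the number $0.4371985206$ being the relevant root in $(0,\frac12)$ of $V(P,\gb_3(I))=V(P,\gd_3(I))$. For the formula, decompose the support into the pieces $J_\go$, $\go\in\set{1,2}^{\ell(n)-1}$; on $J_\go$ the points of $\gd_n(I)$ that matter form a scaled copy $S_\go(\gd_k)$ with $k\in\set{2,3,4}$, so Lemma~\ref{lemma1} together with the scaling $\int_{J_\go}\min_{a\in S_\go(\gd_k)}(x-a)^2\,dP=\frac{1}{2^{\ell(n)-1}}r^{2(\ell(n)-1)}V(P,\gd_k)$ --- exactly the step in the proof of Proposition~\ref{prop34} --- gives
\[
V(P,\gd_n(I))=\begin{cases}\frac{r^{2(\ell(n)-1)}}{2^{\ell(n)-1}}\big(V(P,\gd_3)(n-2^{\ell(n)})+V(P,\gd_2)(3\cdot2^{\ell(n)-1}-n)\big),&2^{\ell(n)}\le n\le3\cdot2^{\ell(n)-1},\\\frac{r^{2(\ell(n)-1)}}{2^{\ell(n)-1}}\big(V(P,\gd_3)(2^{\ell(n)+1}-n)+V(P,\gd_4)(n-3\cdot2^{\ell(n)-1})\big),&3\cdot2^{\ell(n)-1}<n<2^{\ell(n)+1}.\end{cases}
\]
For this to hold as an equality one checks, exactly as in Proposition~\ref{prop51}, that the Voronoi boundaries of $\gd_3$ fall strictly between the appropriate basic intervals (finitely many explicit inequalities in $r$, which lift through the $S_\go$); but since $\min_{a\in\gd_n(I)}(x-a)^2\le\min_{a\in S_\go(\gd_k)}(x-a)^2$ on $J_\go$, the right-hand side is in any case an upper bound for $V(P,\gd_n(I))$, and an upper bound is all the proposition needs.

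\textbf{Reduction to $n=3$.} Since $\gd_2=\set{a(1),a(2)}=\gb_2$, and since $\te{card}(I)=0$ for $n=4$ makes $\gd_4=S_1(\gd_2)\uu S_2(\gd_2)=\set{a(11),a(12),a(21),a(22)}=\gb_4$, Corollary~\ref{cor1} gives $V(P,\gd_2)=r^2V$ and $V(P,\gd_4)=r^4V$. The only genuinely new quantity, $V(P,\gd_3)$, I would compute from Corollary~\ref{cor1} via the parallel-axis decomposition $\int_A(x-a(A))^2\,dP=\sum_i\int_{J_{\gs^{(i)}}}x^2\,dP-P(A)\,a(A)^2$ applied to the three cells $A=J_{11}\uu J_{121}\uu J_{1221}$, $A=J_{1222}\uu J_{21}$, $A=J_{22}$ of $P$-masses $\frac7{16},\frac5{16},\frac14$, which expresses $V(P,\gd_3)$ as an explicit rational function of $r$ (the reflected choice of $\gd_3$ gives the same value, by symmetry of $P$ about $\frac12$). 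Recall $V(P,\gb_3(I))=\frac12r^2(1+r^2)V$ and, from Definition~\ref{def1}, $V(P,\gb_n(I))=\frac{r^{2\ell(n)}}{2^{\ell(n)}}V\big(2^{\ell(n)+1}-n+r^2(n-2^{\ell(n)})\big)$. Substituting $V(P,\gd_2)=r^2V$, $V(P,\gd_4)=r^4V$ into the formula above and using the elementary identities $\frac{r^2}2V(1+r^2)=V(P,\gb_3(I))$ and $\frac{r^2}2V(r^2-1)-r^4V=-V(P,\gb_3(I))$, a short computation gives, for every $n$ not of the form $2^{\ell(n)}$,
\[
V(P,\gb_n(I))-V(P,\gd_n(I))=c_n\big(V(P,\gb_3(I))-V(P,\gd_3(I))\big),
\]
with $c_n=\frac{(n-2^{\ell(n)})r^{2(\ell(n)-1)}}{2^{\ell(n)-1}}>0$ when $2^{\ell(n)}<n\le3\cdot2^{\ell(n)-1}$, and $c_n=r^{2(\ell(n)-1)}\big(1-\frac{n-3\cdot2^{\ell(n)-1}}{2^{\ell(n)-1}}\big)>0$ when $3\cdot2^{\ell(n)-1}<n<2^{\ell(n)+1}$. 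Hence $V(P,\gd_n(I))<V(P,\gb_n(I))$ for all such $n$ simultaneously if and only if $V(P,\gd_3(I))<V(P,\gb_3(I))$.

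\textbf{The scalar inequality, and the hard part.} It remains to analyze $f(r):=V(P,\gb_3(I))-V(P,\gd_3(I))$; clearing the positive denominator (a power of $1+r$) turns this into a polynomial $g(r)$. One checks that $g$ vanishes at $r_1\approx0.4371985206$, that it has no other zero in $\big(r_1,\frac{5-\sqrt{17}}2\big]$ (e.g.\ by bounding $g'$ to get strict monotonicity there), and that $g>0$ just to the right of $r_1$; therefore $V(P,\gd_3(I))<V(P,\gb_3(I))$ throughout $\big(0.4371985206,\frac{5-\sqrt{17}}2\big]$, and the proposition follows from the reduction. The conceptual content --- the formula and the collapse to $n=3$ --- merely mirrors Proposition~\ref{prop34} and Proposition~\ref{prop4}; the real obstacle is arithmetic and lies in two places: computing $V(P,\gd_3(I))$, where the cell $J_{11}\uu J_{121}\uu J_{1221}$ spans three distinct levels so the bookkeeping is heavier than for $\gg_3$, and then proving that $g$ changes sign exactly once in $(0,\frac12)$, at $r_1$, with the correct sign on $\big(r_1,\frac{5-\sqrt{17}}2\big]$ --- since $r_1$ and $\frac{5-\sqrt{17}}2$ are genuine algebraic numbers, this has to be done with honest bounds, not with ten-digit decimal approximations.
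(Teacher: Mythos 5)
The paper does not actually prove this proposition: it is imported verbatim as \cite[Proposition~4.3]{R2}, so there is no in-paper argument to compare against. Your reconstruction is nonetheless sound and is exactly the mechanism the paper itself uses for the pair $(\gb_n(I),\gg_n(I))$ in Propositions~\ref{prop34} and~\ref{prop4}, transplanted to $\gd_n(I)$. I checked your reduction: the cell masses $\frac{7}{16},\frac{5}{16},\frac14$ for $\gd_3$ are right, $\gd_4=\gb_4$ so $V(P,\gd_4)=r^4V$, and the algebra collapsing $V(P,\gb_n(I))-V(P,\gd_n(I))$ to $c_n\big(V(P,\gb_3(I))-V(P,\gd_3(I))\big)$ with your stated positive $c_n$ works out in both regimes $2^{\ell(n)}<n\le 3\cdot 2^{\ell(n)-1}$ and $3\cdot 2^{\ell(n)-1}<n<2^{\ell(n)+1}$. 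Your observation that the natural-cell decomposition only needs to be an \emph{upper} bound for $V(P,\gd_n(I))$ is a nice simplification, since it spares you the CVT verification for $\gd_n(I)$ that the paper performs for $\gg_n(I)$ in Proposition~\ref{prop51}; just be aware that you then must prove the scalar inequality with the natural-cell value $\overline{V}(P,\gd_3(I))\ge V(P,\gd_3(I))$ in place of the true distortion, which is what your computation produces anyway. The one thing you have not actually done is the content of the cited result: writing out $V(P,\gd_3(I))$ as a rational function of $r$ (the cell $J_{11}\uu J_{121}\uu J_{1221}$ mixes three levels, so this is genuinely messier than for $\gg_3$) and establishing, with rigorous bounds rather than ten-digit decimals, that the resulting polynomial is positive on all of $\big(r_1,\frac{5-\sqrt{17}}{2}\big]$ where $r_1\approx 0.4371985206$. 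Until that polynomial is exhibited and its sign controlled, the proof is a correct skeleton rather than a complete argument.
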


We are now ready to give the proof of the main theorem Theorem~\ref{maintheorem}.

\subsection* {Proof of Theorem~\ref{maintheorem}.} Recall that $\gb_n(I)$ forms a CVT if $0<r\leq\frac{5-\sqrt{17}}2$. In \cite{GL3}, it is shown that $\gb_n(I)$ forms an optimal set of $n$-means if $r=\frac 13<0.4350411707$. Theorem~\ref{Th1} implies that $\gb_n(I)$ also forms an optimal set of $n$-means if $r= 0.4350411707$. Proposition~\ref{prop4} implies that
 $V(P, \gb_n(I))=V(P, \gg_n(I))$ if $r=0.4350411707$, and $V(P, \gb_n(I))>V(P, \gg_n(I))$  if $0.4350411707< r\leq 0.4376259168<\frac{5-\sqrt{17}}2$. By Proposition~\ref{prop5}, it follows that if $0.4371985206<r\leq \frac{5-\sqrt{17}}2$, then $V(P, \gb_n(I))>V(P, \gd_n(I))$. Hence, if $0.4350411707<r\leq\frac{5-\sqrt{17}}2$, then the set $\gb_n(I)$ forms a CVT but does not form an optimal set of $n$-means.
Thus, the least upper bound of $r$ for which $\gb_n(I)$ forms an optimal set of $n$-means is given by $r=0.4350411707$, and this completes the proof of the theorem. \qed

\subsection*{Acknowledgement} The author is grateful to the referees for their valuable comments and suggestions.


\begin{thebibliography}{9999}
\bibitem[AW]{AW} E.F. Abaya and G.L. Wise, \emph{Some remarks on the existence of optimal quantizers}, Statistics \& Probability Letters, Volume 2, Issue 6, December 1984, Pages 349-351.



    \bibitem[DFG]{DFG} Q. Du, V. Faber and M. Gunzburger, \emph{Centroidal Voronoi Tessellations: Applications and Algorithms}, SIAM Review, Vol. 41, No. 4 (1999), pp. 637-676.

\bibitem[DR]{DR} C.P. Dettmann and M.K. Roychowdhury, \emph{Quantization for uniform distributions on equilateral triangles}, Real Analysis Exchange, Vol. 42(1), 2017, pp. 149-166.

\bibitem[GG]{GG} A. Gersho and R.M. Gray, \emph{Vector quantization and signal compression}, Kluwer Academy publishers: Boston, 1992.

\bibitem[GKL]{GKL}  R.M. Gray, J.C. Kieffer and Y. Linde, \emph{Locally optimal block quantizer design}, Information and Control, 45 (1980), pp. 178-198.


\bibitem[GL1]{GL1} A. Gy\"orgy and T. Linder, \emph{On the structure of optimal entropy-constrained scalar quantizers},  IEEE transactions on information theory, vol. 48, no. 2, February 2002.

\bibitem[GL2]{GL2} S. Graf and H. Luschgy, \emph{Foundations of quantization for probability distributions}, Lecture Notes in Mathematics 1730, Springer, Berlin, 2000.


\bibitem[GL3]{GL3} S. Graf and H. Luschgy, \emph{The Quantization of the Cantor Distribution}, Math. Nachr., 183 (1997), 113-133.

  \bibitem[GN]{GN} R.M. Gray and D.L. Neuhoff, \emph{Quantization}, IEEE Transactions on Information Theory, October 1998, Vol. 44 Issue 6, 2325-2383.


\bibitem[H]{H} J. Hutchinson, \emph{Fractals and self-similarity}, Indiana Univ. J., 30 (1981), 713-747.


\bibitem[P]{P} D. Pollard, \emph{Quantization and the Method of $k$-Means}, IEEE Transactions on Information Theory, 28 (1982), 199-205.


\bibitem[R1]{R1} M.K. Roychowdhury, \emph{Optimal quantizers for some absolutely continuous probability measures}, Real Analysis Exchange, Vol. 43(1), 2017, pp. 105-136.

\bibitem[R2]{R2} M.K. Roychowdhury, \emph{Quantization and centroidal Voronoi tessellations for probability measures on dyadic Cantor sets}, Journal of Fractal Geometry, 4 (2017), 127-146.


\bibitem[Z1]{Z1} P.L. Zador, \emph{Asymptotic Quantization Error of Continuous Signals and the Quantization Dimension}, IEEE Transactions on Information Theory, 28 (1982), 139-149.

\bibitem[Z2]{Z2} R. Zam, \emph{Lattice Coding for Signals and Networks: A Structured Coding Approach to Quantization, Modulation, and Multiuser Information Theory}, Cambridge University Press, 2014.

\end{thebibliography}
\end{document}